\newcommand{\arxiv}[2][]{\ifthenelse{\equal{#1}{}}
{\href{http://arxiv.org/abs/#2}{\tt arXiv:#2}}
{\href{http://arxiv.org/abs/math/#2}{\tt arXiv:math.#1/#2}}}
\theoremstyle{plain}
\newtheorem{theorem}{Теорема}[section]
\newtheorem{lemma}[theorem]{Лемма}
\newtheorem{corollary}[theorem]{Следствие}
\newtheorem{proposition}[theorem]{Предложение}
\newtheoremstyle{remark}
{}{}{}{}{\itshape}{}{ }{\thmname{#1}\thmnumber{ \itshape #2.}}
\theoremstyle{remark}
\newtheorem{remark}[theorem]{Замечание}
\newtheorem{example}[theorem]{Пример}
\newtheoremstyle{concise}
{}{}{}{}{\bfseries}{}{ }{\thmnumber{#2.}\thmnote{ #3.}}
\theoremstyle{concise}
\def\x{\times}
\def\but{\setminus}
\def\eps{\varepsilon}
\def\phi{\varphi}
\renewcommand{\:}{\colon}
\def\R{\Bbb{R}}
\def\Z{\Bbb{Z}}
\def\C{\Bbb{C}}
\DeclareMathOperator{\id}{id}
\def\@settitle{\begin{center}%
    \baselineskip14\p@\relax
    \bfseries
    \@title
  \end{center}%
}
\begin{document}
\title[]{ПОГРУЖЕНИЯ ОКРУЖНОСТИ В ПОВЕРХНОСТЬ}
\author{С. А. Мелихов}
\address{Математический институт им. В. А. Стеклова РАН, 119991 Москва, ул. Губкина 8}
\email{melikhov@mi-ras.ru}
\thanks{Исследование выполнено за счет гранта Российского научного фонда (проект № 14-50-00005)}

\begin{abstract}
Погружения $f$ окружности в двумерное многообразие $M$ расклассифицированы в терминах элементарных инвариантов: 
чётности $S(f)$ числа двойных точек самотрансверсальной $C^1$-аппроксимации $f$
и числа вращения $T(e\bar f)$ погружения $e\bar f\:S^1\to M_f\subset\R^2$, где $\bar f$ --- поднятие $f$ 
в накрытие $M_f$ поверхности $M$, соответствующее подгруппе $\left<[f]\right>\subset\pi_1(M)$.

А именно, погружения $f,g\:S^1\to M$ регулярно гомотопны, если и только если они гомотопны, и если $M=S^2$ или
$\R P^2$ или нормальное раслоение $\nu(f)$ неориентируемо, то $S(f)=S(g)$; а если $M\ne S^2,\R P^2$ и
$\nu(f)$, $\nu(g)$ имеют ориентации $o$, $o'$, согласованные относительно гомотопии, то 
$T(e_o\bar f)=T(e_{o'}\bar g)$, где $e_o$ --- стандартное вложение ориентированной поверхности $M_f$ 
(кольца или плоскости) в $\R^2$.

В действительности, для гомотопных погружений $f$, $g$ как $S(f)-S(g)$, так и $T(e_o\bar f)-T(e_{o'}\bar g)$ 
сводятся к числу вращения поднятия некоторого нульгомотопного погружения $f\#g^*$ на универсальное накрытие $M$.

Выше <<погружения>> $S^1\to M$ --- гладкие или топологические; приводится теорема сглаживания, показывающая, что 
это не имеет значения.
Также получена классификация погружений графа в $M$ с точностью до регулярной гомотопии в терминах инвариантов
$S(f)$ и $T(e_o\bar f)$ погруженных окружностей.
Доказательства основаны на h-принципе и несложны.
\end{abstract}

\maketitle

\section{Введение}

Гомотопическая классификация гладких погружений окружности в поверхности была получена С. Смейлом в 1958г.
и с тех пор многократно обобщалась и передоказывалась (см.\ \cite{Ad}, \cite{RS}, \cite{RS2}).
Этим классическим результатом, однако, ещё далеко не решается вопрос о классификации таких погружений в 
элементарных терминах (например, наглядных или легко вычислимых на практике).

В недавней работе Д. Пермякова \cite{P}, развивающей методы Reinhart'а (1960), Chillingworth'а (1972) и 
Бурмана--Поляка \cite{BP2}, была получена классификация гладких (в смысле \cite{Mu}) погружений графа в двумерное 
многообразие $M$, отличное от проективной плоскости, в терминах некоторого <<числа вращения>> пары гомотопных погружений 
$S^1\to M$ по отношению к заданному векторному полю с нулями на $M$.
Этот же результат в немного усиленном виде (в части понимания того, что именно считать <<классификацией>>) вошёл
в диссертацию Пермякова \cite{P2}, которую мне выпало оппонировать.
Не имея никаких претензий к диссертации (автор которой, несомненно, решил поставленную ему задачу), я в то же время
не мог не заметить, что с точки зрения принципа <<лезвия Оккама>> сам результат, опубликованный в Матсборнике \cite{P},
не выглядит вполне убедительным.

Число вращения погружения $f\:S^1\to\R^2$ --- элементарный классический инвариант, определяемый в одну строчку 
(а именно, это --- число полных оборотов касательного вектора) и не зависящий ни от каких выборов и векторных полей.
Однако, предложенное в \cite{P} и \cite{P2} определение <<числа вращения>> в других поверхностях довольно 
трудоёмко (так, уже оно само по себе занимает 4 страницы), но даёт лишь <<различающую>> двух гомотопных погружений, 
а не каноничный инвариант отдельного погружения (неканоничный инвариант, конечно, получится, если зафиксировать 
в каждом классе гомотопии какое-нибудь погружение наобум); при этом случай проективной плоскости вовсе выпадает 
из рассмотрения.

Но куда более спорным представляется сам подход, основанный на произвольном выборе векторного поля с нулями --- 
в отсутствие каких-либо указаний на то, что этот случайный выбор не является попросту излишним.
Если некое вычисление проведено в координатах, но ответ не зависит от выбора координат, естественно задаться вопросом 
о том, нельзя ли и всё вычисление провести на инвариантном языке; и нельзя ли возникающий из этого вычисления 
результат сформулировать в инвариантных терминах.

Цель данной заметки --- дать положительный ответ на эти вопросы (применительно к погружениям в поверхность).
Основные результаты работы сформулированы в \S\ref{main results} в случае погружений окружности и обобщены на графы
в \S\ref{graphs}.
Кроме того, в \S\ref{smoothing} доказана теорема сглаживания, устанавливающая связь между топологическими и
гладкими погружениями и формально независимая от остальной части работы (не считая определений в начале \S\ref{circle}).

Довольно странно, что все эти несложные результаты не были получены уже лет 70 назад (точнее, ничего похожего
в литературе мне не удалось найти ни самому, ни путём опроса нескольких экспертов по теме из разных стран). 
Однако в свете сложного результата, опубликованного недавно в данном журнале \cite{P}, кажется, что они всё ещё
(или, наоборот: теперь уже) представляют некоторый интерес.
В случае погружений в $\R P^2$ классификация, полученная в данной работе, 
появилась также в недавнем препринте М.~Ивашковского \cite{I}, где она установлена другим методом 
(см.\ замечания \ref{ivashk} и \ref{relations} ниже).

\section{Погружения окружности} \label{circle}

Под {\it погружением} полиэдра в многообразие мы понимаем непрерывное отображение, которое при ограничении 
на достаточно малую окрестность всякой точки является вложением (т.е. гомеоморфизмом на свой образ).
В частности, погружением является всякое {\it гладкое погружение} между гладкими многообразиями --- дифференцируемое
отображение, имеющее инъективный дифференциал в каждой точке.
{\it (Гладкая) регулярная гомотопия} --- это такая гомотопия $f\:Y\x I\to X$, что отображение $Y\x I\to X\x I$,
$(y,t)\mapsto\big(f(y),t\big)$ --- (гладкое) погружение.

Под {\it поверхностью} мы понимаем $2$-многообразие без края (возможно, неориентируемое и некомпактное).
Несложно показать (см. \S\ref{smoothing}), что всякое погружение из $S^1$ в поверхность $M$ регулярно 
гомотопно гладкому погружению, а регулярно гомотопные гладкие погружения $S^1\to M$ соединяются гладкой 
регулярной гомотопией.

{\it Число вращения} $T(f)$ гладкого погружения $f\:S^1\to\R^2$ --- это число полных оборотов касательного вектора. 
Оно очевидно инвариантно при гладкой регулярной гомотопии, поэтому в силу сказанного выше определено и для 
произвольного погружения $S^1\to\R^2$ и является инвариантом регулярной гомотопии.
{\it Число вращения} $T(f)$ погружения $f\:S^1\to S^2$ --- это класс вычетов по модулю 2, определяемый с помощью 
стереографической проекции: легко видеть, что при проходе погруженной окружности через бесконечность её число 
вращения меняется на двойку.

{\it Самопересечённость} $s(f)$ самотрансверсального гладкого погружения $f$ из $S^1$ в поверхность $M$ --- 
это класс вычетов по модулю 2, противоположный чётности числа двойных точек $f$.
Она очевидно инвариантна при самотрансверсальной регулярной гомотопии, поэтому в силу мульти-0-струйной теоремы 
трансверсальности определена и для произвольного погружения $S^1\to M$ и является инвариантом регулярной 
гомотопии.
(В действительности, $s(f)$ инвариантна и при регулярном бордизме, но нам это не потребуется.)
Как заметил Уитни (см.\ \cite{BP}, \cite{BP2}), для погружений в плоскость $s(f)$ равна чётности $T(f)$; 
следовательно, для погружений в сферу $s(f)=T(f)$.

Если задано нульгомотопное погружение $f$ из окружности в ориентированную поверхность $M$, назовём его 
{\it (ориентированным) числом вращения} $T(f)$ число вращения какого-нибудь поднятия $f$ на универсальное 
накрытие $M$.
(Два разных поднятия $f$ совмещаются сохраняющим ориентацию гомеорфизмом, который изотопен тождественному и,
следовательно, сохраняет число вращения.)
Для нульгомотопного погружения $f$ окружности в произвольную поверхность аналогичное 
{\it (неориентированное) число вращения} $t(f)$ корректно определено по модулю 2, поскольку поднятия $f$ на 
универсальное накрытие совмещаются гомеоморфизмом и, следовательно, имеют одинаковую самопересечённость.
Очевидно, оба этих числа вращения являются инвариантами регулярной гомотопии.

\begin{lemma} \label{self-intersection} 
Для нульгомотопного погружения $f$ из $S^1$ в поверхность $t(f)=s(f)$.
\end{lemma}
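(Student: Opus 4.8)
The plan is to pass to the universal cover and then compress the curve into a single evenly covered disk, where the covering becomes trivial and Whitney's planar identity applies verbatim. Fix a lift $\tilde f\colon S^1\to\tilde M$ of $f$ to the universal cover $p\colon\tilde M\to M$; such a lift exists because $f$ is null-homotopic, and $\tilde M$, being a simply connected surface without boundary, is homeomorphic to $\R^2$ or $S^2$. By the definition of the unoriented turning number, $t(f)$ is the residue $T(\tilde f)\bmod 2$, and by the Whitney identity quoted above (in the plane $s=$ parity of $T$, on the sphere $s=T$) this residue equals $s(\tilde f)$ in both cases. Hence $t(f)=s(\tilde f)$, and the lemma reduces to the single assertion $s(f)=s(\tilde f)$: that passing to the cover does not change the parity of the number of double points.

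To prove this I would compress $\tilde f$ into a disk. Since $s$ is a regular homotopy invariant for immersions into any surface, $\tilde M$ included, I may replace $\tilde f$ by any curve regularly homotopic to it. By the Whitney--Graustein theorem when $\tilde M=\R^2$, and by the two-class mod-$2$ classification of immersions $S^1\to S^2$ when $\tilde M=S^2$, the curve $\tilde f$ is regularly homotopic to a standard self-transverse model $\tilde e$ (a standard turning-number-$k$ curve in the plane; a small round circle or a small figure eight on the sphere). Shrinking and translating $\tilde e$, I may assume it lies in a disk $\tilde B\subset\tilde M$ so small that $p|_{\tilde B}$ is an embedding onto a disk $B=p(\tilde B)\subset M$.

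Projecting finishes the argument. Composing the regular homotopy from $\tilde f$ to $\tilde e$ with $p$ gives a regular homotopy from $f=p\tilde f$ to $e:=p\tilde e$, so $s(f)=s(e)$ while $s(\tilde f)=s(\tilde e)$. Because $p$ is injective on $\tilde B$, the curves $e$ and $\tilde e$ have exactly the same double points, so $s(e)=s(\tilde e)$. Chaining the equalities yields $s(f)=s(e)=s(\tilde e)=s(\tilde f)=t(f)$, as claimed.

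The main obstacle, and really the only delicate point, is this compression step: one must treat $\tilde M=\R^2$ and $\tilde M=S^2$ uniformly, invoking the correct regular homotopy classification in each, and one must guarantee that the model curve can be placed inside an evenly covered disk, so that $f$ and its lift literally coincide there and the double-point count transports without change. This is exactly what lets us sidestep a head-on comparison of the double points of $f$ and of $\tilde f$ — a comparison that would otherwise force us to check that the ``non-liftable'' double points (those pairing a point of $\tilde f$ with a $g$-translate of $\tilde f$ for $g\neq 1$) occur in even number. That parity is automatic when $\pi_1(M)$ is torsion-free, since each such family is counted by an intersection number $[\tilde f]\cdot[g\tilde f]=0$ in the simply connected $\tilde M$, but it becomes subtler precisely for $M=\R P^2$; the disk reduction avoids the issue entirely.
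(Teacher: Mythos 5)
Your proof is correct, and its core is the same as the paper's: compress the lifted curve into an evenly covered disk, where $t$ and $s$ visibly coincide by Whitney's planar identity, then transport the conclusion back using the regular homotopy invariance of both quantities under the projected homotopy. The one genuine difference is how you achieve the compression: the paper simply drags the image $\tilde f(S^1)$ into a small ball $B$ by an ambient isotopy $H_t$ of $\tilde M$ (a scaling, after puncturing $S^2$ at a point missed by the image, if $\tilde M=S^2$), so that $pH_t\tilde f$ is at once the required regular homotopy in $M$; you instead invoke the Whitney--Graustein theorem for $\R^2$ and the two-class classification of immersions $S^1\to S^2$ to replace $\tilde f$ by a standard model curve in the disk. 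Your route is logically sound --- those classification results are classical and external to the paper, so no circularity with Theorem \ref{oriented completeness} arises --- but it is heavier than necessary, and since those theorems concern smooth immersions you should, strictly speaking, first smooth $\tilde f$ (using \S\ref{smoothing}, or the fact that the invariants are computed on smooth representatives anyway) before applying them. The paper's scaling isotopy needs no classification theorem, no named model, and no smoothness care at this step, which is what its brevity buys.
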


\begin{proof} Образ поднятия $\tilde f$ погружения на универсальное накрытие $\tilde M$ можно затащить в сколь угодно 
малый шар $B$ объемлющей изотопией $H_t$ многообразия $\tilde M$ (т.е.\ $H_0=\id$, $H_1\tilde f(S^1)\subset B$).
Если $B$ достаточно мал, ограничение проекции $p\:\tilde M\to M$ на $B$ будет гомеоморфизмом.
Поэтому для погружения $f':=pH_1\tilde f$ неориентированное число вращения $t(f')$ равно его 
самопересечённости $s(f')$.
Но и первое, и вторая остаются неизменными при регулярной гомотопии $pH_t\tilde f$ между $f$ и $f'$.
\end{proof}

\subsection{Ориентированный случай}
Пусть $f$ --- погружение из окружности в ориентированную поверхность $M$, не являющееся нульгомотопным. 
Рассмотрим поднятие $\bar f$ отображения $f$ на накрытие $M_f$ поверхности $M$, соответствующее подгруппе 
$\pi_1(M)$, порождённой $[f]$.
(В качестве базисной точки $M$ берётся образ базисной точки $S^1$.)
Поскольку $\pi_1(M_f)$ конечно порождена, $M_f$ гомеоморфна замкнутой поверхности с конечным числом проколов.
Поскольку $M$ ориентируема, $M_f$ также ориентируема, значит, $M_f$ гомеоморфна сфере с ручками и проколами.
Поскольку $\pi_1(M_f)$ --- циклическая, $M_f$ гомеоморфна кольцу.

Поскольку $M$ ориентирована, на $M_f$ тоже имеется ориентация $o$, которая однозначно задаёт сохраняющее 
ориентацию погружение $\phi_o$ этого кольца в плоскость (ориентированную стандартным образом), при котором 
средняя линия кольца погружается стандартным погружением <<восьмёрка>>, имеющем число вращения $0$. 
В этом случае можно говорить о {\it числе вращения} $T(f)$ погружения $f$, определённом как $T(\phi_o\bar f)$.
Это число вращения очевидно является инвариантом регулярной гомотопии $f$.

Легко видеть, что для $f$, не являющегося нульгомотопным, $T(f)$ меняет знак при смене ориентации $M$ (это 
соответствует переворачиванию погруженного кольца, при котором завитки погружения $\bar f$ меняют знак) и 
остаётся неизменным при смене ориентации $S^1$ (которая реализуется композицией переворачивания кольца и 
зеркальной симметрии плоскости, которая тоже обращает знаки завитков).
Заметим, что эти свойства симметрии подпоритились бы, если бы вместо погружения <<восьмёрка>> мы взяли одно
из стандартных вложений (с числом вращения $1$ или $-1$), что и объясняет наш выбор.

А для нульгомотопного $f$, так же как и для всякого погружения $S^1\to\R^2$, знак $T(f)$ меняется
как при смене ориентации $M$, так и при смене ориентации $S^1$.

\begin{proposition} \label{oriented completeness} Если $T(f)=T(g)$ для гомотопных погружений $f,g$ из окружности в 
ориентированную поверхность, то $f$ и $g$ регулярно гомотопны.

Более того, если заданная гомотопия неподвижна на некоторой дуге окружности, то и регулярную гомотопию можно 
выбрать неподвижной на ней.
\end{proposition}

\begin{proof} 
Первое утверждение сводится ко второму, т.к.\ используя заданную гомотопию, несложно построить регулярную
гомотопию от $f$ к $f'$, совпдающему с $g$ на заданной дуге $J$ и гомотопному $g$ неподвижно на ней.
Второе утверждение достаточно доказать для случая $M=M_f$, поскольку гомотопия между $f$ и $g$ поднимается до 
гомотопии между $\bar f$ и $\bar g$, а регулярная гомотопия между $\bar f$ и $\bar g$ проектируется в
регулярную гомотопию между $f$ и $g$, причём все гомотопии можно считать неподвижными на $J$.
Наконец, утверждение достаточно доказать для гладких погружений, а для них оно согласно h-принципу (см.\ 
\cite{Ad}, \cite{RS}, \cite{RS2}) 
сводится к тому, что если $T(f)=T(g)$, погружения $f$, $g$ имеют фиксированное значение $x$ и фиксированное 
направление $v\in S_xM$ в базисной точке $1\in S^1$ и задают один и тот же элемент в $\pi_1(M,x)$, то их 
сферизованные дифференциалы $sf,sg\:SS^1\to SM$, где $SS^1\cong S^1\sqcup S^1$, задают один и тот же элемент 
в $\pi_1(SM,v)$.
(Здесь $SM$ обозначает сферизацию касательного расслоения, а $S_xM$ --- её слой в точке $x\in M$.)
Случай $M=S^2$ сводится к случаю $M=\R^2$, т.к.\ если для погружений в проколотую сферу $T(f)-T(g)$ чётно, 
легко построить регулярную гомотопию в сфере от $f$ к погружению $f'$ в проколотую сферу с $T(f')=T(g)$.
Значит, можно считать, что $M$ --- плоскость или кольцо, в частности, параллелизуемо.
Тогда $SM\cong M\x S^1$, и $\pi_1(SM,v)\simeq\pi_1(M,x)\x\Z$.
Первая координата $[sf]\in\pi_1(SM,v)$ --- это $[f]\in\pi_1(M,x)$, а вторая --- это $T(f)$.
Поскольку $[f]=[g]$ и $T(f)=T(g)$, утверждение доказано.
\end{proof}

\subsection{Оснащённый случай}
{\it Оснащением} погружения $f$ из $S^1$ в поверхность $M$ называется выбор ориентации ($\Leftrightarrow$ 
тривиализации) в нормальном расслоении $\nu(f)$.
Таким образом, у всякого $f$ с ориентируемым $\nu(f)$ есть ровно два оснащения, а если $\nu(f)$ неориентируемо 
--- то ни одного.
Погружение с выбранным оснащением называется {\it оснащённым погружением}.

Всякое оснащённое погружение $F=(f,\Xi)$ из окружности в поверхность $M$ имеет оснащённое поднятие $(\bar f,\bar\Xi)$ 
на накрытие $M_f$ (если $f$ нульгомотопно, определение $M_f$ тоже имеет смысл и даёт универсальное накрытие).
Поскольку $M_f$ ориентируемо, $\bar\Xi$ задаёт его ориентацию $o$.
Поэтому определено {\it число вращения} $T(F):=T(\phi_o\bar f)$ (в случае, если $M_f$ --- универсальное накрытие,
в качестве $\phi_o$ берётся $\id\:M_f\to M_f$).
Заметим, что если $M$ неориентируемо, а на его ориентирующем накрытии $\hat M$ зафиксирована ориентация, то $F$
имеет однозначное оснащённое поднятие $(\hat f,\hat\Xi)$ на $\hat M$, сохраняющее ориентацию, и $T(F)=T(\hat f)$.

Два гомотопных оснащённых погружения $F,G$ называются {\it оснащённо гомотопными}, если локальные ориентации
поверхности, задающие выбранные оснащения, совмещаются заданной гомотопией.
{\it Оснащённая регулярная гомотопия} --- это просто регулярная гомотопия, являющаяся оснащённой гомотопией.
Очевидно, $T(F)$ является инвариантом оснащённой регулярной гомотопии.

Поскольку оснащённая гомотопия поднимается на орентирующее накрытие, предложение \ref{oriented completeness} имеет

\begin{corollary} \label{framed completeness} Если $T(F)=T(G)$ для оснащённо гомотопных оснащённых 
погружений $F$, $G$ из $S^1$ в поверхность, то $F$ и $G$ оснащённо регулярно гомотопны.

Более того, если заданная гомотопия неподвижна на некоторой дуге окружности, то и регулярную гомотопию можно 
выбрать неподвижной на ней.
\end{corollary}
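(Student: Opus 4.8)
The plan is to reduce the statement to Theorem \ref{oriented completeness} by passing to the orienting double cover $p\colon\hat M\to M$, on which a fixed orientation makes the ambient surface oriented. Since $F=(f,\Xi)$ and $G=(g,\Theta)$ carry framings, their normal bundles are orientable, so $f$ and $g$ are $w_1$-trivial loops and hence lift to $\hat M$; by the observation preceding the corollary each of them has a \emph{unique} orientation-preserving framed lift $\hat F=(\hat f,\hat\Xi)$ and $\hat G=(\hat g,\hat\Theta)$, and these satisfy $T(\hat f)=T(F)$, $T(\hat g)=T(G)$, where now $T(\hat f),T(\hat g)$ are the oriented turning numbers in $\hat M$. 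Thus the hypothesis $T(F)=T(G)$ becomes $T(\hat f)=T(\hat g)$.

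First I would lift the given framed homotopy $H$ between $F$ and $G$ to a homotopy $\hat H\colon S^1\times I\to\hat M$ starting at $\hat f$. Such a lift exists because every $H_t$ is a $w_1$-trivial loop, and its terminal map $\hat H_1$ is one of the two lifts of $g$. The framed-homotopy hypothesis --- that the local orientations of $M$ defining $\Xi$ and $\Theta$ are matched by $H$ --- is exactly what forces $\hat H_1$ to be the orientation-preserving lift, i.e.\ $\hat H_1=\hat g$. Consequently $\hat f$ and $\hat g$ are homotopic in the oriented surface $\hat M$ with $T(\hat f)=T(\hat g)$, and Theorem \ref{oriented completeness} supplies a regular homotopy $\hat R$ between them; since $H$, hence $\hat H$, is fixed on the prescribed arc $J$, the ``moreover'' clause of the theorem lets me take $\hat R$ fixed on $J$ as well. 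Projecting, $R:=p\hat R$ is a regular homotopy in $M$ from $f$ to $g$, fixed on $J$, and because $\hat R$ takes place in the oriented $\hat M$ the ambient orientation is carried along the whole homotopy and projects to the family of local orientations defining the framing; hence $R$ is a framed regular homotopy carrying $\Xi$ to $\Theta$, so $F$ and $G$ are framed regularly homotopic. For orientable $M$ the same argument applies with $\hat M$ the trivial double cover; alternatively one fixes an orientation of $M$, notes that $T(F)=\pm T(f)$ with a sign shared by $F$ and $G$ thanks to the matching of framings, and applies the theorem directly.

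The main obstacle is the middle step: verifying that the framed-homotopy condition is equivalent to ``the orientation-preserving lifts of the two ends are joined by a lift of $H$.'' This is where the hypothesis is used in full, and it requires translating ``$H$ matches the local orientations defining the framings'' into a statement about the monodromy of $H$ in the orienting cover. Once this is pinned down, the remaining verifications --- that a lift of $H$ through $\hat f$ terminates at $\hat g$ rather than at its orientation-reversing partner, and that the downstairs regular homotopy $p\hat R$ is genuinely framed rather than merely regular --- are routine, since the fixed orientation of $\hat M$ provides the framing throughout and $p$ is a local diffeomorphism.
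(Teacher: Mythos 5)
Your proof is correct and takes essentially the same route as the paper: the paper's own one-line argument is precisely that a framed homotopy lifts to the orienting cover (where the framings become compatible with a fixed orientation and $T(F)=T(\hat f)$, as noted just before the corollary), so that Theorem \ref{oriented completeness} applies and the resulting regular homotopy projects back down. The details you spell out --- that the lift of the homotopy terminates at the orientation-preserving lift of $G$, and that the projected regular homotopy is framed --- are exactly what the paper leaves implicit.
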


Из свойств $T(f)$ следует, что $T(f,-\Xi)=-T(f,\Xi)$ и $T(f^*,\Xi^*)=T(f,\Xi)$, где $f^*$ 
обозначает $f$, <<пройденное в обратную сторону>>, т.е.\ предварённое комплексным сопряжением в $S^1\subset\C$, 
а его оснащение $\Xi^*$ имеет ту же ориентацию тотального пространства, что и $\Xi$, но противоположную ориентацию слоёв.

\begin{remark}\label{framing remark}
(a) Если $M=S^2$ или $\R P^2$, то $T(f,\Xi)$ --- класс вычетов по модулю 2, и из соотношения $T(f,-\Xi)=-T(f,\Xi)$
следует, что $T(f,\Xi)$ не зависит от выбора $\Xi$.
Более того, для $M=S^2$ или $\R P^2$ из ориентируемости $\nu(f)$ следует нульгомотопность $f$, откуда 
по определению $T(f,\Xi)=t(f)$; а по лемме \ref{self-intersection} $t(f)=s(f)$.

(б) Если $M\ne S^2,\R P^2$, то из соотношения $T(f,-\Xi)=-T(f,\Xi)$ следует, что абсолютная величина $|T(f,\Xi)|$ 
не зависит от выбора $\Xi$, и стало быть является инвариантом погружения $f$ с ориентируемым $\nu(f)$.
Обозначим её через $T_+(f)$.

(в) Если $M\ne S^2,\R P^2$ и оснащённое погружение $(\phi,\Xi)$ гомотопно $(\phi,-\Xi)$, то для погружений $f$ и $g$, 
гомотопных $\phi$, из $T_+(f)=T_+(g)$ следует, что $f$ и $g$ регулярно гомотопны.
В самом деле, из $T_+(f)=T_+(g)$ следует, что $T(f,\Xi_f)=T(g,\Xi_g)$ для некоторых $\Xi_f$ и $\Xi_g$, 
но в силу нашего предположения произвольные оснащения $f$ и $g$ совмещаются некоторой гомотопией, и утверждение 
вытекает из следствия \ref{framed completeness}.

(г) Допустим теперь, что оснащённое погружение $\Phi:=(\phi,\Xi)$ не гомотопно $(\phi,-\Xi)$.
Тогда для любого погружения $f$, гомотопного $\phi$, можно однозначно выбрать оснащение $\Xi_f$, такое что
$(f,\Xi_f)$ гомотопно $\Phi$.
Следовательно, определён инвариант $T_\Phi(f):=T(f,\Xi_f)$ (хотя он и не определён канонически, т.к. зависит от выбора 
оснащённого погружения $\Phi$).
Согласно следствию \ref{framed completeness} погружения $f$, $g$, гомотопные $\phi$, регулярно гомотопны друг другу,
если и только если $T_\Phi(f)=T_\Phi(g)$.
\end{remark}

\begin{example} \label{framing example}
(а) Если погружение $f\:S^1\to M$ нульгомотопно, то $(f,\Xi)$ оснащённо гомотопно $(f^*,\Xi^*)$.
Если дополнительно $M$ неориентируемо, то протащив $f$ вдоль какого-нибудь листа Мёбиуса, лежащего в $M$, 
мы получим оснащённую регулярную гомотопию от $(f^*,\Xi^*)$ к $(f,-\Xi)$.
В результате получается оснащённая гомотопия $h_t$ от $(f,\Xi)$ к $(f,-\Xi)$.

(б) Пример ненульгомотопного погружения $f$, для которого $(f,\Xi)$ оснащённо гомотопно $(f,-\Xi)$, доставляется вложением
в $M$ края какого-нибудь листа Мёбиуса, лежащего $M$.
Например, такой вид имеет вложение двулистного накрытия над $S^1$ в расслоение бутылки Клейна над $S^1$.

(в) Пример погружения $f$, для которого $(f,\Xi)$ оснащённо гомотопно $(f^*,-\Xi^*)$, но не $(f,-\Xi)$,
даётся слоем расслоения бутылки Клейна над $S^1$.
\end{example}

\begin{proposition} Оснащённое пунктированное погружение $(f,\Xi)$ из $S^1$ в $M$ гомотопно $(f,-\Xi)$ если
и только если $[f]=[g]^n\in\pi_1(M)$ для некоторого пунктированного погружения $g\:S^1\to M$ с неориентируемым 
$\nu(g)$.
\end{proposition}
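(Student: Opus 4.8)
The plan is to recast framed homotopy in terms of $\pi_1(M)$ and then read off the criterion from the structure of surface groups. Write $w\colon\pi_1(M)\to\Z/2$ for the homomorphism sending a loop to $0$ if it preserves a local orientation of $M$ and to $1$ otherwise, and recall that $\nu(f)$ is orientable exactly when $w([f])=0$, so the existence of $(f,\Xi)$ forces $[f]\in\ker w$. An oriented framing $\Xi$ is the same datum as a continuous choice of local orientation of $M$ along $f(S^1)$ (the tangent orientation coming from $f'$ together with $\Xi$ orients $TM$ there), i.e.\ a lift of $f$ to the orientation double covering $\hat M\to M$; the two framings $\pm\Xi$ are the two such lifts, interchanged by the deck transformation. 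A framed homotopy is a homotopy $h_t$ of $f$ carrying this family of local orientations, and the transported local orientation is defined throughout because each $h_t$ is freely homotopic to $f$ and hence orientation-preserving. Transporting it once around the basepoint loop $\delta(t)=h_t(1)$ reverses it if and only if $w([\delta])=1$, and since $S^1$ is connected the end framing is determined by its value at the basepoint. As a self-homotopy of $f$ with prescribed basepoint trace $\delta$ exists if and only if $[\delta]$ commutes with $[f]$, I obtain the working criterion: $(f,\Xi)$ is homotopic to $(f,-\Xi)$ if and only if the centralizer of $[f]$ in $\pi_1(M)$ contains an element $\delta$ with $w([\delta])=1$. (The homotopies here are free in $M$ --- the basepoint must be allowed to travel --- which is exactly the behaviour exhibited in Example~\ref{framing example}.)

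The forward implication is then immediate: if $[f]=[g]^n$ with $\nu(g)$ non-orientable, then $w([g])=1$ and $[g]$ commutes with $[g]^n=[f]$, so $\delta=[g]$ satisfies the criterion. Geometrically this is just dragging $f$ once around $g$, which reverses the framing.

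For the converse I start from an orientation-reversing $\delta$ commuting with $a:=[f]$ and must produce an orientation-reversing $b$ with $a=b^n$. The subgroup $\langle a,\delta\rangle$ is abelian, and here I invoke the structure of surface groups: an abelian subgroup of $\pi_1(M)$ containing an orientation-reversing element is cyclic. Granting this, $\langle a,\delta\rangle=\langle\beta\rangle$, and $w([\delta])=1$ forces $w(\beta)=1$; writing $a=\beta^n$ and realizing $\beta$ by a pointed immersed loop $g$ (every class of $\pi_1(M)$ has an immersed representative) yields $[f]=[g]^n$ with $\nu(g)$ non-orientable, as required. The case $[f]=1$ is covered by $n=0$: the criterion then reduces to ``$M$ is non-orientable'', which is precisely when an orientation-reversing $g$ exists.

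The cyclicity statement is the one real obstacle, and it is where the low-complexity surfaces demand separate attention. For $M$ with free $\pi_1$ (all open surfaces) and for the closed hyperbolic surfaces it is standard that commuting non-trivial elements are common powers of a primitive element, i.e.\ centralizers of non-trivial elements are infinite cyclic. The remaining closed surfaces are $S^2$, $\R P^2$, $T^2$ and the Klein bottle: $S^2$ and $T^2$ are orientable, so the hypothesis is vacuous; for $\R P^2$ one has $\pi_1=\Z/2=\langle\beta\rangle$ and the claim is trivial; and for the Klein bottle a direct computation shows that the centralizer of every orientation-reversing element is infinite cyclic, equivalently that the unique $\Z^2$ subgroup is the orientation subgroup $\ker w$, which contains no orientation-reversing element. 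This disposes of all cases and completes the argument.
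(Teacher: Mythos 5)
Your proof is correct, and its first half --- recasting a framed self-homotopy as transport of local orientations along the basepoint trace $\delta(t)=h_t(1)$, leading to the criterion that $(f,\Xi)$ is framed-homotopic to $(f,-\Xi)$ if and only if the centralizer of $[f]$ in $\pi_1(M)$ contains an orientation-reversing element --- is essentially the same reduction the paper makes (the paper phrases it via non-liftability of the torus map $h'\:S^1\x (I/\partial I)\to M$ to the orientation cover $\hat M$, but the content is identical). Where you genuinely diverge is the converse. You prove the key fact --- an abelian subgroup of $\pi_1(M)$ containing an orientation-reversing element is cyclic --- by a case analysis: free $\pi_1$ for open surfaces, cyclic centralizers in closed hyperbolic surface groups, and a direct computation for the Klein bottle, with $S^2$, $T^2$, $\R P^2$ trivial. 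The paper instead gets this uniformly from covering-space theory: the cover $\tilde M$ of $M$ corresponding to $\left<[f],\alpha\right>$ has finitely generated abelian $\pi_1$ and is non-orientable (the covering cannot factor through $\hat M$), hence $\tilde M$ is $\R P^2$ or a Möbius band, so $\pi_1(\tilde M)$ is cyclic; moreover its generator is realized by an embedded circle whose projection is the desired $g$ with $\nu(g)$ non-orientable. The paper's route needs only the classification of surfaces, no hyperbolic geometry, no case analysis, and produces $g$ concretely; yours is quicker if one takes the structure theory of surface groups as known, but the Klein bottle must then be done by hand, and your parenthetical there is imprecise: the Klein bottle group has many subgroups isomorphic to $\Z^2$ (any finite-index subgroup of $\ker w$ is one), so there is no ``unique $\Z^2$ subgroup''; what your computation actually shows, and what you need, is that every such subgroup lies in $\ker w$, equivalently that centralizers of orientation-reversing elements are infinite cyclic. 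This slip sits in a side remark and does not affect the validity of the argument.
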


\begin{proof} Всякая гомотопия $h\:S^1\x I\to M$ погружения $f$ с самим собой поднимается на ориентирующее 
накрытие $\hat M$ (по теореме о накрывающей гомотопии), причём $h$ является оснащённой гомотопией между 
$(f,\Xi)$ и $(f,-\Xi)$, если и только если отображение тора $h'\:S^1\x (I/\partial I)\to M$, через которое 
пропускается $h$, не поднимается на $\hat M$.
Неподнимаемость $h'$ на $\hat M$ в свою очередь равносильна тому, что образ $\pi_1(S^1\x S^1)$ не лежит в
образе $\pi_1(\hat M)$.
Поскольку $[f]\in\pi_1(M)$ лежит в образе $\pi_1(\hat M)$, последнее условие равносильно тому, что
$[f]$ коммутирует с некоторым элементом $\alpha\in\pi_1(M)$, не лежащим в образе $\pi_1(\hat M)$.

Если $[f]=[g]^n$, где $\nu(g)$ неориентируемо, то в качестве такого $\alpha$ можно взять $[g]$.
Обратно, пусть элемент $\alpha$ задан.
Рассмотрим накрытие $\tilde M$ многообразия $M$, соответствующее подгруппе 
$\left<[f],\alpha\right>\subset\pi_1(M)$.
Поскольку $\pi_1(\tilde M)$ конечно порождена, $\tilde M$ гомеоморфно замкнутой поверхности с конечным
числом проколов.
Поскольку $\alpha$ лежит в образе $\pi_1(\tilde M)$, но не лежит в образе $\pi_1(\hat M)$, накрытие $\tilde M\to M$
не пропускается через $\hat M$, так что $\tilde M$ неориентируемо.
Поскольку $[f]$ коммутирует с $\alpha$ в $\pi_1(M)$, группа $\pi_1(\tilde M)$ --- абелева. 
Следовательно, $\tilde M$ --- проективная плоскость или лист Мёбиуса.
Пусть $\tilde g\:S^1\to\tilde M$ --- вложение, представляющее образующую $\pi_1(\tilde M)$, и пусть
$g\:S^1\to M$ --- его композиция с накрытием $\tilde M\to M$.
Поскольку $\nu(\tilde g)$ неориентируемо, то и $\nu(g)$ неориентируемо.
Поскольку $\pi_1(\tilde M)$ --- циклическая, поднятие $\tilde f\:S^1\to\tilde M$ погружения $f$ пунктированно 
гомотопно композиции некоторого накрытия $\phi\:S^1\to S^1$ и вложения $\tilde g$.
Значит, и $f$ пунктированно гомотопно $\phi g$.
\end{proof}

\subsection{Неоснащаемый случай}
Если погружения $f,g\:S^1\to M$ имеют общую начальную точку $p=f(1)=g(1)$, и возникающее отображение букета
$F\:S^1\vee S^1\to M$ является погружением, назовём композицию погружения $\id_{S^1}*\id_{S^1}\:S^1\to S^1\vee S^1$,
задаваемого конкатенацией петель, и погружения $F$ {\it конкатенацией} погружений $f$ и $g$, и обозначим её $f\# g$.

Существует три комбинаторных типа конкатенации, представленные следующими тремя возможностями (но сами не
требующие никакой трансверсальности):
\begin{enumerate}[label=(\Roman*)]
\item $f$ и $g$ трансверсальны в $p$;
\item $f\# g$ самотрансверсально в $p$;
\item $f\# g^*$ самотрансверсально в $p$.
\end{enumerate}

\begin{lemma} \label{concatenation} Если $f$ и $g$ гомотопны, то $s(f\# g)=s(f)+s(g)+w_1\nu(f)$ для типов I, II
и $s(f\# g)\ne s(f)+s(g)+w_1\nu(f)$ для типа III. 
\end{lemma}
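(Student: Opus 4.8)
The plan is to reduce everything to counting, mod $2$, the double points of a self-transverse approximation of $f\#g$, together with the normalization $s(\gamma)\equiv D(\gamma)+1\pmod2$, where $D(\gamma)$ denotes the number of double points. I would choose the approximation so that it restricts to self-transverse approximations of the $f$-part and the $g$-part separately; then every double point of $f\#g$ is either a self-double-point of one part or a crossing between the two parts. Writing $I$ for the mod $2$ number of such $f$-$g$ crossings, this gives $D(f\#g)=D(f)+D(g)+I$, and hence $s(f\#g)=s(f)+s(g)+I+1$ (the two $+1$'s from $D(f),D(g)$ cancel against each other). Thus the whole statement reduces to showing $I\equiv w_1\nu(f)+1$ for types I, II and $I\equiv w_1\nu(f)$ for type III.

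Next I would split $I=n+\delta$, where $n$ counts crossings of the two parts away from $p$ and $\delta$ counts the crossings created inside a small disk about $p$ when the two corners of $f\#g$ there are rounded. Away from $p$ these crossings are just the crossings of $f$ and $g$, so, perturbing $g$ near $p$ to be transverse to $f$ there, the total mod $2$ intersection number splits as $[f]\cdot[g]=n+c$, where $c$ is the number of intersection points near $p$. Because $f\simeq g$, the number $[f]\cdot[g]$ equals the self-intersection of $f$ with a normal pushoff, i.e. $[f]\cdot[g]=w_1\nu(f)$ (this is the count of zeros of a generic normal field, so no Poincaré duality is used and the identity is valid for non-compact $M$). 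Hence $n\equiv w_1\nu(f)+c$ and $s(f\#g)=s(f)+s(g)+w_1\nu(f)+(c+\delta)+1$, reducing the lemma to the parity of $c+\delta$ in each type.

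The combinatorial heart is then a local analysis at $p$ of the four germ-ends $f_\pm,g_\pm$ of the two parts. Rounding the corners reconnects these ends along the chords $\{f_-,g_+\}$ and $\{g_-,f_+\}$, and $\delta\equiv1$ exactly when those chords interleave on a small circle around $p$, while $c\equiv1$ exactly when $f$ and $g$ cross transversally at $p$. If the tangent lines differ (type I), the four ends form interleaved antipodal pairs, so the reconnection chords do not interleave and the lines cross once: $(c,\delta)=(1,0)$. If the tangent lines coincide, the ends cluster near $\pm$ the common direction and $c=0$; comparing the cyclic orders shows that when $f,g$ run the same way (type II) both reconnection chords span the two clusters and must interleave, giving $(c,\delta)=(0,1)$, whereas when they run opposite ways (type III) each chord stays inside one cluster, giving $(c,\delta)=(0,0)$. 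Therefore $c+\delta$ is odd for types I, II and even for type III, which is precisely the asserted dichotomy.

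The step I expect to be the main obstacle is this last one: one must verify that the combinatorial type is well defined, i.e. that interleaving of the reconnection chords depends only on whether the tangent lines and the traversal directions agree, and not on the curvatures that fix the precise cyclic order of $f_\pm,g_\pm$; and one must match this with the representative scenarios (\,$f$, $g$ transverse; $f\#g$ self-transverse; $f\#g^*$ self-transverse\,) that name the three types. The remainder is bookkeeping, but the ubiquitous $+1$ in $s\equiv D+1$ has to be tracked with care, since it is exactly this constant that converts the equality for types I, II into the inequality for type III.
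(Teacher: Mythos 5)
Your proposal is correct and takes essentially the same route as the paper: the same splitting of the double points of $f\#g$ into double points of $f$, double points of $g$, and $f$--$g$ crossings, the same case-by-case count of the local contributions at $p$ (your values of $(c,\delta)$ are exactly the paper's ``one double point of $\overline{f\#g}$ in type II, one $\bar f$--$\bar g$ intersection in type I, otherwise zero or two''), and the same global identification of the mod~2 intersection number of homotopic curves with $w_1\nu(f)$. The only cosmetic difference is that you justify this last identity by counting zeros of a generic normal field (pushoff) rather than by the cup-product/Thom-isomorphism argument, but the paper itself records that geometric version in its parenthetical remark.
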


Здесь $w_1$ --- класс Штифеля-Уитни, детектирующий ориентируемость расслоения.

\begin{proof} Можно считать, что $f$ и $g$ трансверсальны везде, кроме быть может точки $p=f(1)=g(1)$, 
и самотрансверсальны во всех точках.
Пусть $\bar f$, $\bar g$ --- трансверсальные аппроксимации $f$ и $g$, а $\overline{f\# g}$ --- самотрансверсальная 
аппроксимация $f\# g$; можно считать, что все изменения происходят только в малой окрестности точки $p$. 
Двойные точки $f\# g$ --- это двойные точки $f$, двойные точки $g$ и пересечения между $f$ и $g$.
В случае II точка $p$ даёт одну двойную точку $\overline{f\# g}$, а в случаях I, III --- ноль (или две).
В случае I $p$ даёт одно пересечение между $\bar f$ и $\bar g$, а в случаях II, III --- ноль (или два).
Поэтому в случае III чётность числа двойных точек $\overline{f\# g}$ равна сумме чётностей чисел двойных точек $f$,
двойных точек $g$ и пересечений между $\bar f$ и $\bar g$, а в случаях I, II --- противоположна ей.
Но чётность числа пересечений между $\bar f$ и $\bar g$ --- это произведение классов когомологий по модулю 2,
двойственных к одномерным классам гомологий, представленным $\bar f$ и $\bar g$. 
Но это один и тот же класс (поскольку $\bar f$ и $\bar g$ гомотопны), и его квадрат в 
$H^2_c(M)\simeq H^2_c\big(\nu(f)\big)$ переводится изоморфизмом Тома в $w_1\nu(f)$.
(На геометрическом языке: $w_1\nu(f)$ есть класс Эйлера по модулю 2, так что он равен чётности числа нулей общего сечения 
$\nu(f)$.)
\end{proof}

Если $f\:S^1\to M$ --- гладкое погружение и $o\:S^1\to M$ --- вложение на маленькую гладкую окружность, имеющую 
сонаправленное касание с $f$ в точке $f(1)=o(1)$, будем говорить, что конкатенация $f\# o$ получена из $f$ 
{\it добавлением завитка}; собственно {\it завитком} погружения $f\# o$ будем называть дугу $o(S^1\but\{1\})$
его образа.
Если $M$ ориентировано или $f$ оснащено, у завитка имеется знак, определяемый тем, с какой стороны от образа $f$
он расположен.

\begin{lemma} \label{Moebius}
Пусть $f$ --- вложение из $S^1$ в среднюю линюю (= нулевое сечение) листа Мёбуиса $M$ и пусть $\phi\:S^1\to M$ 
--- погружение, полученное из $f$ 1) добавлением одного завитка, 2) репарамертризацией, в результате которой  
$\phi(1)$ попадает в завиток, и 3) регулярной гомотопией, возникающей под действием изотопии листа Мёбиуса 
с носителем в некоторой окрестности $U$ замыкания завитка, в результате которой $\phi(1)$ перемещается в $f(1)$
таким образом, что $\phi$ и $f$ имеют в этой точке противонаправленное касание, причём других пересечений между 
$\phi$ и $f$ внутри $U$ нет.
Тогда $[(sf^*)^*]=[s\phi]\in\pi_1\big(SM,s\phi(1)\big)$.  
\end{lemma}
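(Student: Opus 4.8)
The plan is to compute both sides as based homotopy classes of Gauss lifts in the unit tangent bundle $SM$, all loops being based at $v:=s\phi(1)$, the tangent direction at $p:=f(1)=\phi(1)$ opposite to that of $f$. First I would describe $\pi_1(SM)$. Since $M$ deformation retracts onto its core circle $C=f(S^1)$ and $SM\to M$ is a fibration, $SM$ retracts onto the circle bundle $SM|_C$ over $S^1$; as $M$ is non-orientable along $C$, the monodromy reverses the fibre, so $SM|_C$ is a Klein bottle and $\pi_1(SM)=\langle a,b\mid aba^{-1}b\rangle$, where $a$ is represented by the core carrying its forward unit tangent (so $a=[sf]$), $b$ is the fibre class, and $aba^{-1}=b^{-1}$ encodes the non-orientability. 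Writing $\alpha\colon SM\to SM$ for the fibrewise antipodal map (rotation of every tangent direction by $\pi$), we have $v=\alpha(sf(1))$.

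For the right-hand side, differentiating $f^*=f\circ(\text{conjugation})$ shows that reversing the parametrization reverses the tangent direction, i.e. $s(f^*)=\alpha\circ(sf)^*$, whence $(sf^*)^*=\alpha\circ sf$: the loop running once forward around $C$ while pointing its tangent backward. Lifting this loop to the universal cover of $SM$ from $v$ and reading off the terminal deck transformation gives $[(sf^*)^*]=ba$; a bare $a$ would return the constant backward direction to the direction $-\pi$, and the monodromy supplies the extra $b$ needed to land back at $v$.

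For the left-hand side I would pass to the orientable universal cover of $M$, the flat strip $\widetilde M=\R^2$ with $a_M(\theta,y)=(\theta+2\pi,-y)$, so that the tangent angle $\psi$ becomes a genuine $\R$-valued function along any lifted arc. The lift $\widetilde\phi$ is an arc from $(0,0)$ (where $\psi=\pi$) to $a_M(0,0)=(2\pi,0)$. Since $\phi$ agrees with $f$ outside a small disk $U$ around $p$, this lift splits as: the half $\delta_2$ of the local arc running from the basepoint to where $\phi$ leaves $U$; a straight core segment of turning $0$; and the $a_M$-image of the other half $\delta_1$ of the local arc. Writing $\tau_1,\tau_2$ for the turnings of $\delta_1,\delta_2$ and using that $a_M$ reverses $\psi$ (hence negates the turning of the last piece), the terminal angle is $\psi_{\mathrm{end}}=\pi+\tau_2-\tau_1$, so that $[s\phi]=b^{k}a$ with $k=1+(\tau_2-\tau_1)/2\pi$.

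It therefore remains to show $\tau_1=\tau_2$, which is the one point where the geometric hypotheses enter and which I expect to be the main obstacle. Since $\phi$ meets $f$ inside $U$ only at $p$, each half of the local arc stays on one side of $f$ and so has turning congruent to $\pm\pi$; the opposite-tangency condition at $p$ fixes their endpoint directions, and adding exactly one curl shifts one of the turnings by $\pm2\pi$. The content of the lemma is that the construction inserts this single curl, and chooses its sign, precisely so as to equalise the two halves ($\tau_1=\tau_2$) rather than to produce a discrepancy of $\pm4\pi$ (which would give $k=-1$ or $k=3$). Granting this, $k=1$ and $[s\phi]=ba=[(sf^*)^*]$. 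I would fix the fibre orientation, the curl's sign and the isotopy's direction once at the outset so that this final sign bookkeeping becomes transparent; this is the only delicate step, the remainder reducing to the two routine universal-cover computations above.
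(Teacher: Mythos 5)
The paper contains no proof of this lemma to compare against: it says only ``Доказательство несложно'' and remarks that the statement becomes false for the annulus. Measured against what a complete argument requires, your framework and your two computations are correct: $\pi_1(SM)$ is the Klein bottle group $\langle a,b\mid aba^{-1}b\rangle$, the right-hand side is $[(sf^*)^*]=ba$, and lifting $s\phi$ to the universal cover gives $[s\phi]=b^{k}a$ with $k=1+(\tau_2-\tau_1)/2\pi$; the sign reversal on the turning of the second half of the local arc, caused by the orientation-reversing deck transformation, is exactly where non-orientability enters, consistent with the annulus remark.

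The genuine gap is at the step you yourself flag: $\tau_1=\tau_2$ is assumed (``Granting this''), not proved, and the justification offered --- that the construction ``inserts this single curl, and chooses its sign, precisely so as to equalise the two halves'' --- is circular: it treats the conclusion as if it were built into the hypotheses. The lemma's hypotheses say only that $\phi$ is the image of core-plus-curl under an ambient isotopy supported in $U$, with opposite tangency at $f(1)$ and no other intersections with $f$ inside $U$; no sign of the curl is chosen, and none needs to be --- on the M\"obius band \emph{both} signs of the added curl yield $ba$, and this sign-independence is precisely the M\"obius phenomenon that fails on the annulus, so the ``chooses its sign'' framing signals that the mechanism has not actually been verified. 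What must be shown is that the listed conditions exclude $k=-1,3,-3,\dots$; note that invariance of the \emph{free} regular homotopy class under the isotopy only pins down the parity of $k$, since the conjugacy class of $ba$ in the Klein bottle group is $\{b^{\mathrm{odd}}a\}$, so genuinely geometric input is unavoidable. One way to supply it: (a) since the isotopy preserves orientation and the traversal direction at the basepoint must be opposite to that of $f$, the displaced curl is forced to lie on a definite side of $f$, tangent to it with the basepoint at its outermost point; (b) the two halves of the displaced core arc are embedded, confined by the no-intersection condition to one side of $f$ and barred from crossing the displaced curl, and a Hopf Umlaufsatz (rotation number of a Jordan curve, with cusp corners) argument then determines their turnings exactly rather than merely modulo $2\pi$. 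Without (a) and (b), or some substitute, the proof is missing exactly its decisive step.
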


Доказательство несложно; заметим лишь, что при замене листа Мёбиуса на кольцо утверждение становится неверным.

\begin{proposition} \label{unoriented completeness} Если $s(f)=s(g)$ для гомотопных погружений $f,g$ из $S^1$ в 
поверхность, причём $\nu(f)$ неориентируемо, то $f$ и $g$ регулярно гомотопны.

Более того, если заданная гомотопия неподвижна на некоторой точке окружности, то и регулярную гомотопию можно 
выбрать неподвижной на ней.
\end{proposition}

\begin{proof} Как и в доказательстве предложения \ref{oriented completeness}, достаточно разобрать случай 
с неподвижной точкой $p$.
Погружения $f$ и $g$ можно подправить регулярной гомотопией так, чтобы они стали гладкими и чтобы точка $p$
стала бы для них точкой сонаправленного касания, квадратичного в том смысле, что оно локально выглядит 
как касание графиков $y=x^2$ и $y=-x^2$ (а не $y=x^3$ и $y=-x^3$). 
Достаточно было бы показать, что сферизованный дифференциал $sf$ пунктированно гомотопен $sg$, или,
другими словами, что петля $sf\cdot(sg)^*$ нульгомотопна.
Пусть $h$ --- гладкая аппроксимация погружения $f\# g^*$, регулярно гомотопная ему и дважды касающаяся $f$ 
в точке $p$.
Поскольку $\nu(f)$ неориентируемо, из леммы \ref{Moebius} следует, что $sh$ пунктированно гомотопен 
$sf\cdot(sg)^*$.
Поэтому было бы достаточно показать, что $sh$ нульгомотопен.

Поскольку $\nu(f)$ неориентируемо и $s(f)=s(g)=s(g^*)$, по лемме \ref{concatenation} (для конкатенации типа III)
$s(h)=0$.
Тогда по лемме \ref{self-intersection} $T(h,\Xi)$ чётно для любого оснащения $\Xi$.
Добавим к $h$ чётное число завитков одного знака (знаки определяются выбором $\Xi$), так чтобы 
полученное оснащённое погружение имело $T(h',\Xi')=0$.
Тогда по следствию \ref{framed completeness} $h'$ регулярно гомотопно восьмёрке во вложенном диске,
и в частности сферизованный дифференциал $sh'$ нульгомотопен.
Можно считать, что $h'$ --- сглаживание $f'\# g^*$, где $f'$ получено из $f$ добавлением чётного числа завитков, 
не задевающих точку $p$.
Тогда $sf'$ пунктированно гомотопен $sg$, и, значит, по h-принципу (см.\ 
\cite{Ad}, \cite{RS}, \cite{RS2}) $f'$ регулярно гомотопно $g$, оставляя
неподвижной точку $p$ (и даже касательное направление в ней).
С другой стороны, $f'$ регулярно гомотопно $f$, поскольку половину завитков можно протащить регулярной 
гомотопией вдоль всего образа $f$, в результате чего они сменят свой (локальный) знак и сократятся с оставшимися.
Эту регулярную гомотопию можно подправить так, чтобы она оставляла неподвижной точку $p$ (но, конечно, не касательное
направление в ней). 
\end{proof}

\begin{example}
Заменить в формулировке предложения \ref{unoriented completeness} <<точку>> на <<дугу>> не получится. 
Действительно, рассмотрим вложение $f$ из $S^1$ в среднюю линию листа Мёбиуса, и пусть $g$ --- погружение,
полученное из $f$ добавлением пары завитков, не пересекающих какую-нибудь дугу $J$ на образе $f$.
Тогда $s(f)=s(g)$, но если бы $f$ и $g$ были регулярно гомотопны, оставляя неподвижной $J$,
то конкатенация $f\# g^*$ была бы регулярно гомотопна $f\# f^*$.
Но $T(f\# f^*,\Xi)=\pm 1$ в зависимости от выбора $\Xi$, тогда как при подходящем знаке завитков
$T(f\# g^*,\Xi')=\pm 3$ в зависимости от выбора $\Xi'$, так что такой регулярной гомотопии не существует. 
\end{example}

\subsection{Основные результаты} \label{main results}

Следующие результаты --- прямые следствия предложений, доказанных выше. 

\begin{theorem} Погружения $f$, $g$ из $S^1$ в ориентированную поверхность регулярно гомотопны, если и только если
$f$ и $g$ гомотопны и $T(f)=T(g)$.
\end{theorem}

\begin{theorem} \label{unoriented classification} 
Погружения $f$, $g$ из $S^1$ в поверхность $M$ регулярно гомотопны, если и только если
$f$ и $g$ гомотопны, причём 
\begin{itemize}
\item если $\nu(f)$ ориентируемо, $T(f,\Xi_f)=T(g,\Xi_g)$ для каких-нибудь оснащений $\Xi_f$, $\Xi_g$,
согласованных относительно какой-нибудь гомотопии между $f$, $g$;
\item если $\nu(f)$ неориентируемо, $s(f)=s(g)$.
\end{itemize}
Более того, условие согласованности однозначно задаёт $\Xi_g$ по $\Xi_f$, если и только если $[f]\in\pi_1(M)$ 
не является степенью $[\phi]$ с неориентируемым $\nu(\phi)$; при неоднозначности оснащения $\Xi_g$ его изменение
выражается в смене знака $T(g,\Xi_g)$.
\end{theorem}

В случае $M=\R P^2$ формулировку теоремы \ref{unoriented classification} можно упростить, поскольку
всякое погружение $f\:S^1\to\R P^2$ либо имеет неориентируемое $\nu(f)$, либо нульгомотопно, причём в последнем
случае $T(f,\Xi)=s(f)$ (см.\ замечание \ref{framing remark}(а)):

\begin{corollary} \label{RP2} Погружения $f,g\:S^1\to\R P^2$ регулярно гомотопны, если и только если
$f$ и $g$ гомотопны и $s(f)=s(g)$.
\end{corollary}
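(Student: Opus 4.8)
План состоит в том, чтобы вывести утверждение из следствия \ref{unoriented classification}, воспользовавшись тем, что $\pi_1(\R P^2)=\Z/2$. Прежде всего я бы установил ключевую дихотомию: для произвольного погружения $f\:S^1\to\R P^2$ расслоение $\nu(f)$ ориентируемо тогда и только тогда, когда $f$ нульгомотопно. В самом деле, ориентируемость $\nu(f)$ определяется классом $f^*w_1(T\R P^2)\in H^1(S^1;\Z/2)$, то есть значением $\langle w_1(T\R P^2),[f]\rangle$; поскольку $w_1(T\R P^2)$ --- нетривиальный элемент группы $H^1(\R P^2;\Z/2)=\Z/2$, а $\pi_1(\R P^2)=H_1(\R P^2;\Z/2)=\Z/2$, это значение обращается в нуль в точности тогда, когда $[f]=0$, т.е.\ когда $f$ нульгомотопно.

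Направление ``только если'' немедленно: регулярно гомотопные погружения гомотопны, а $s$ --- инвариант регулярной гомотопии. Для направления ``если'' я бы разобрал два случая, на которые распадается следствие \ref{unoriented classification}; поскольку $\pi_1(\R P^2)=\Z/2$, из гомотопности $f$ и $g$ следует, что либо оба они нульгомотопны, либо оба нет. Если оба не нульгомотопны, то по дихотомии $\nu(f)$, $\nu(g)$ неориентируемы, и критерий регулярной гомотопности из следствия \ref{unoriented classification} есть в точности $s(f)=s(g)$. Если же оба нульгомотопны, то $\nu(f)$, $\nu(g)$ ориентируемы, и тот же критерий принимает вид $T(f,\Xi_f)=T(g,\Xi_g)$ для каких-нибудь согласованных оснащений; но согласно замечанию \ref{framing remark}(а) для $M=\R P^2$ величина $T(f,\Xi)$ есть класс вычетов по модулю $2$, не зависящий от выбора $\Xi$ и равный $s(f)$ (аналогично $T(g,\Xi)=s(g)$), так что это условие равносильно $s(f)=s(g)$. В обоих случаях критерий сводится к $s(f)=s(g)$, что и требовалось.

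Серьёзного препятствия здесь не ожидается: утверждение --- прямое следствие уже доказанной классификации \ref{unoriented classification}. Главная тонкость, требующая аккуратной проверки, --- это дихотомия первого шага (равносильность нульгомотопности и ориентируемости $\nu(f)$ именно для $\R P^2$) и то, что в ориентируемом случае зависимость $T(f,\Xi)$ от оснащения исчезает в силу замечания \ref{framing remark}(а). Благодаря последнему обстоятельству вопросы о согласованности и однозначности выбора оснащения $\Xi_g$, существенные в общей формулировке \ref{unoriented classification}, здесь становятся несущественными, и формулировка упрощается до одного числового инварианта $s$.
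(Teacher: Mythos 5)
Ваше рассуждение верно и по существу повторяет доказательство из статьи: там следствие \ref{RP2} тоже выводится из следствия \ref{unoriented classification} с помощью той же дихотомии (всякое погружение $f\:S^1\to\R P^2$ либо имеет неориентируемое $\nu(f)$, либо нульгомотопно) и равенства $T(f,\Xi)=s(f)$ из замечания \ref{framing remark}(а). Единственное отличие --- вы явно проверяете дихотомию через $w_1\nu(f)=f^*w_1(T\R P^2)$ и невырожденность спаривания с $H_1(\R P^2;\,\Z/2)$, тогда как в статье это утверждение лишь констатируется в замечании \ref{framing remark}(а).
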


\begin{remark} \label{ivashk}
В недавней работе М. Ивашковского \cite{I} получено другое доказательство следствия \ref{RP2}, основанное 
на сведении общего случая к конкретным примерам с использованием движений Райдемайстера.
Автор узнал о результате Ивашковского из комментариев его научного руководителя Е. Кудрявцевой к приведённому 
выше доказательству теоремы \ref{unoriented classification}.
\end{remark}

\begin{remark} \label{universal cover}
Если погружения $f$ и $g$ пунктированно гомотопны и имеют в точке $f(1)=g(1)$ противоположные направления
(чего легко достичь их пунктированной регулярной гомотопией), то погружение $f\# g^*$ нульгомотопно, 
и условия $T(f)=T(g)$, $T(f,\Xi_f)=T(g,\Xi_g)$ и $s(f)=s(g)$ можно эквивалентно переформулировать в виде 
$T(f\# g^*)=0$, $T(f\# g^*,\Xi_f\#\Xi_g^*)=0$ и $t(f\# g^*)=w_1\nu(f)$, где $T$ и $t$ определены уже в терминах 
обычного числа вращения на универсальном накрытии.
(Действительно, первое утверждение следует из того, что $T(f\#g^*)$ можно вычислить не только на универсальном
накрытии, но и на накрытии $M_f$, где оно очевидно складывается из $T(f)$ и $-T(g)$, второе проверяется
аналогично, а третье вытекает из леммы \ref{self-intersection} и леммы \ref{concatenation} для типов I, II.)
\end{remark}

\begin{remark} Для произвольных погружений $f$ и $g$ (вообще говоря, имеющих разные значения 
в точке $1$) вместо конкатенации $f\#g^*$ определена {\it связная сумма} $f\#_p g^*$ вдоль погруженного 
пути $p\:[a,b]\to M$, такого что $p(a)=f(1)$ и $p(b)=g(1)$, причём $f$, $p$ и $g$ вместе задают погружение 
$S^1\cup_{1=a} I\cup_{b=1} S^1$ в $M$.
Если $f$ и $g$ соединяются гомотопией $h_t$, и путь $p$ гомотопен пути $h_t(1)$ по модулю концов, то 
связная сумма $f\#_p g^*$ нульгомотопна.
Если $f$ и $g$ снабжены оснащениями $\Xi_f$ и $\Xi_g$ (что выполнено автоматически, если на $M$ задана ориентация),
то можно различить 4 типа связных сумм: $(++)$, $(+-)$, $(-+)$ и $(--)$, в зависимости от согласованности ориентации 
пути $p$ с оснащениями $\Xi_f$ и $\Xi_g$ (понимаемыми здесь как ориентации слоёв $\nu(f)$ и $\nu(g)$).
Деление на {\it знакопеременые} ($+-$ и $-+$) и {\it знакопостоянные} типы ($++$ и $--$) имеет смысл и без выбора 
оснащений (и даже когда их не существует), если между $f$ и $g$ задана гомотопия $h_t$, поскольку в этом случае 
можно предполагать согласованность оснащений (если они существуют) относительно $h_t$ или по крайней мере 
согласованность локальных ориентаций в точках $f(1)$ и $g(1)$ (которые можно понимать как локальные оснащения 
$f$ и $g$ в окрестностях этих точек) относительно пути $h_t(1)$. 
В этих предположениях легко видеть, что конкатенации $f\# g^*$ типов I и II (как в замечании \ref{universal cover})
реализуются знакопеременными связными суммами, а типа III (как в доказательстве предложения
\ref{unoriented completeness}) --- знакопостоянными.
Прямые вычисления показывают, что не только в этих случаях, но и для произвольных знакопеременных связных сумм 
$T(f\#_p g^*)=T(f)-T(g)$, $T(f\#_p g^*,\Xi_f\#_p\Xi_g^*)=T(f,\Xi_f)-T(g,\Xi_g)$ и 
$t(f\#_p g)=s(f)-s(g)+w_1\nu(f)$; а для любых знакопостоянных связных сумм $T(f\#_p g^*)=T(f)-T(g)\pm 1$, 
$T(f\#_p g^*,\Xi_f\#_p\Xi_g^*)=T(f,\Xi_f)-T(g,\Xi_g)\pm 1$ и $t(f\#_p g)\ne s(f)-s(g)+w_1\nu(f)$.
Здесь знак $\pm 1$ зависит уже от конкретного типа связной суммы ($++$ или $--$), причём конкатенациям типа
III соответствует только один из двух знакопостоянных типов (какой именно --- зависит от выбора ориентации $M$ или 
согласованных оснащений).
\end{remark}

\section{Погружения графов} \label{graphs}

Под {\it графом} будет пониматься конечный одномерный CW-комплекс (иными словами, допускаются петли и кратные рёбра).

Зафиксируем какой-нибудь остов $T$ связного графа $\Gamma$ (т.е.\ подграф $\Gamma$, являющийся деревом и содержащий все 
вершины $\Gamma$). 
Под {\it опорным циклом} $\Gamma$ (относительно $T$) понимается единственный цикл любого подграфа графа $\Gamma$, 
полученного из $T$ добавлением ровно одного ребра.

\begin{theorem} \label{oriented graph}
Пусть $f,g$ --- погружения связного графа $\Gamma$ в ориентированную поверхность, гомотопные друг другу. 
Тогда $f$ и $g$ регулярно гомотопны, если и только если они задают одинаковые циклические порядки рёбер 
в вершинах $\Gamma$, и для каждого опорного цикла $C$ выполнено $T(f|_C)=T(g|_C)$.
\end{theorem}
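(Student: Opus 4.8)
The plan is to prove necessity by restriction and to reduce sufficiency to the circle case, Theorem~\ref{oriented completeness}, applied to the fundamental cycles one at a time. For necessity, note that a regular homotopy $\Gamma\times I\to M$ restricts over each vertex to a continuous family of embeddings of its star; since $M$ is oriented, each such embedding has a well-defined cyclic order of edge-germs, and this discrete datum is therefore constant in the parameter. Hence $f$ and $g$ induce the same cyclic orders. Restricting the same regular homotopy to a fundamental cycle $C\cong S^1$ produces a regular homotopy from $f|_C$ to $g|_C$, so $T(f|_C)=T(g|_C)$ by the regular-homotopy invariance of the oriented turning number established above.

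For sufficiency I would first put $f$ and $g$ into a common normal form along the spanning tree. The key auxiliary fact is a tree lemma: immersions of a tree into $M$ are classified up to regular homotopy by their cyclic orders at vertices. This is because a tree is contractible, so every edge is an unobstructed arc; one proves it by induction on edges via the h-principle (equivalently, one shrinks the immersed tree into a disc and cites the planar classification). Using the given homotopy $h_t$ together with this lemma, I would regularly homotope $f$ to an immersion $f'$ coinciding with $g$ not merely on $T$ but on a slightly larger contractible subcomplex $T^+$, namely $T$ together with short germs of the non-tree edges at each vertex, in such a way that the homotopy becomes stationary on $T^+$, i.e. $f'\simeq g$ rel~$T^+$. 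Because $f'$ is obtained by transporting $f$ along $h_t$ over the contractible set $T^+$, the conjugation ambiguity inherent in a free homotopy is absorbed into this transport, which is legitimate since we seek only a free regular homotopy.

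Now $f'$ and $g$ agree on $T^+$ and differ only in the interiors of the non-tree edges $e_1,\dots,e_k$. For each $i$ the fundamental cycle $C_i=e_i\cup p_i$, with $p_i\subset T$ the connecting tree path, carries immersions $f'|_{C_i}$ and $g|_{C_i}$ that agree on the closed sub-arc $p_i^+=p_i\cup(\text{germs of }e_i)$, are homotopic rel~$p_i^+$ (by restricting the rel-$T^+$ homotopy), and satisfy $T(f'|_{C_i})=T(f|_{C_i})=T(g|_{C_i})$, the first equality by regular-homotopy invariance and the second by hypothesis. The ``moreover'' clause of Theorem~\ref{oriented completeness} then supplies a regular homotopy from $f'|_{C_i}$ to $g|_{C_i}$ fixing $p_i^+$; extending it by the identity on the rest of $\Gamma$ yields a regular homotopy of $\Gamma$ that fixes $T$ and every edge other than $e_i$ while moving $f'$ to agree with $g$ on $e_i$. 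Carrying this out for $i=1,\dots,k$ in succession deforms $f'$, hence $f$, to $g$.

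The main obstacle I anticipate is keeping $\Gamma$ immersed throughout these per-edge deformations, i.e. preserving the local embedding, hence the cyclic order, at every vertex while the middle of $e_i$ sweeps across $M$. This is precisely why I enlarge the fixed arc from $p_i$ to the connected arc $p_i^+$: pinning short germs of $e_i$ freezes the tangent direction of $e_i$ at both of its endpoints, so the entire star at each vertex is left unchanged and the cyclic order cannot be violated, while crossings of $e_i$ with other edges are harmless since immersion is a local condition. Thus the only real work lies in the second paragraph — proving the tree lemma and arranging the homotopy to be stationary on $T^+$ so that the rel-arc form of Theorem~\ref{oriented completeness} applies; once this normal form is reached, the circle case finishes the argument mechanically.
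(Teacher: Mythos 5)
Your proposal is correct and is essentially the paper's own argument: the paper deduces this theorem immediately from the more general Theorem~\ref{unoriented graph}, whose proof is precisely your scheme --- first bring $f$ to agree with $g$ on a neighbourhood of $T$ in $\Gamma$ (your $T^+$), matching edge by edge along a collapse of the tree and pushing extra kinks out through free ends, then apply the rel-arc circle classification (Theorem~\ref{oriented completeness}, Corollary~\ref{framed completeness}) to each non-tree edge, the supports of these regular homotopies being disjoint closed arcs. Your treatment of the two delicate points (absorbing the conjugation ambiguity by transporting along $h_t$, and pinning the edge-germs so that the vertex stars stay embedded) corresponds exactly to the paper's device of first matching rel a vertex and working with a neighbourhood of $T$ in $\Gamma$ rather than with $T$ itself.
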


Теорема \ref{oriented graph} --- непосредственное следствие теоремы \ref{unoriented graph} ниже.

Всякий элемент $\alpha\in H_1(\Gamma;\,\Z/2)$ представляется эйлеровым подграфом в $\Gamma$ (т.е.\ подграфом, 
в котором степень каждой вершины чётна), который в свою очередь представляется погружением в $\Gamma$ 
дизъюнктного объединения окружностей.
Рассмотрев связную сумму этих погруженных в $\Gamma$ окружностей вдоль каких-нибудь путей (последовательно
добавляя по одной окружности), мы получим одно погружение $f\:S^1\to\Gamma$, представляющее $\alpha$.

\begin{theorem} \label{unoriented graph}
Погружения $f,g$ связного графа $\Gamma$ в поверхность $M$ регулярно гомотопны, если и только если 
они соединяются гомотопией $h_t$, такой что:
\begin{enumerate}
\item $f$ и $g$ задают одинаковый циклический порядок рёбер в каждой вершине $\Gamma$ относительно некоторых локальных 
ориентаций $M$ в образах этой вершины, согласованных относительно $h_t$;
\item для всякого опорного цикла $C$, такого что $\nu(f|_C)$ ориентируемо, 
$T(f|_C,\Xi_f)=T(g|_C,\Xi_g)$ для некоторых оснащений $\Xi_f$, $\Xi_g$, совмещаемых $h_t|_C$;
\item для одного опорного цикла $Z$, такого что $\nu(f|_Z)$ неориентируемо, $s(f|_Z)=s(g|_Z)$ (если такой цикл
существует), а для остальных опорных циклов $C$, таких что $\nu(f|_C)$ неориентируемо,
$T(f\phi_C,\Xi_f)=T(g\phi_C,\Xi_g)$, где $\phi_C$ --- какое-нибудь погружение из $S^1$ в $\Gamma$, представляющее 
$[C]+[Z]\in H_1(\Gamma;\,\Z/2)$, а $\Xi_f$, $\Xi_g$ --- какие-нибудь оснащения, совмещаемые $h_t\phi_C$.
\end{enumerate}
\end{theorem}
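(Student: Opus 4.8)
Необходимость почти очевидна: регулярная гомотопия сохраняет циклические порядки рёбер в вершинах, а композиция регулярной гомотопии с фиксированным погружением $S^1\to\Gamma$ снова является регулярной гомотопией, так что $f|_C$, $f|_Z$ и $f\phi_C$ переходят в регулярно гомотопные $g|_C$, $g|_Z$ и $g\phi_C$; поэтому их инварианты $T$ и $s$ совпадают. Основную работу --- достаточность --- я свёл бы, как и в доказательствах теорем \ref{oriented completeness} и \ref{unoriented completeness}, к h-принципу (см.\ \cite{Ad}, \cite{RS}, \cite{RS2}): достаточно построить регулярную гомотопию между гладкими $f$ и $g$, а для этого --- гомотопию их сферизованных дифференциалов $sf$, $sg$ как отображений $\Gamma\to SM$, совместимых с условием погружения в вершинах. Я строил бы такую гомотопию, последовательно добиваясь совпадения $f$ и $g$ (вместе с касательными данными) на остове $T$, а затем на каждом недревесном ребре; поскольку разные недревесные рёбра не пересекаются, соответствующие гомотопии независимы.

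Главное затруднение --- неориентируемые опорные циклы. Для ориентируемого цикла $C$ погружения $f|_C$, $g|_C$ совпадают на древесной дуге $P_C\subset T$, гомотопны с согласованными (относительно $h_t$) оснащениями, и условие (2) вместе со следствием \ref{framed completeness} (в версии ``неподвижно на дуге'') даёт регулярную гомотопию с носителем на ребре $e_C$. Но для неориентируемого цикла оснащения нет, и приходится пользоваться теоремой \ref{unoriented completeness}, которая даёт совмещение лишь \emph{неподвижно в точке}, а не на дуге (пример после теоремы \ref{unoriented completeness} показывает, что ``на дуге'' было бы неверно). Это и объясняет особую роль одного выделенного неориентируемого цикла $Z$: его я совмещал бы \emph{первым}, ещё до фиксации остова, пользуясь равенством $s(f|_Z)=s(g|_Z)$ и теоремой \ref{unoriented completeness} неподвижно в одной точке $p\in Z$, и протаскивая возникающую регулярную гомотопию окружности $Z$ на весь граф $\Gamma$. После этого остов фиксируется уже неподвижно на $Z$.

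Остальные неориентируемые циклы $C$ я совмещал бы не напрямую, а через вспомогательное погружение $\phi_C$, представляющее $[C]+[Z]\in H_1(\Gamma;\Z/2)$. Ключевое наблюдение: так как и $f|_C$, и $f|_Z$ меняют ориентацию, их сумма её сохраняет ($\langle w_1,[C]\rangle+\langle w_1,[Z]\rangle=1+1=0$), то есть $\nu(f\phi_C)$ ориентируемо, и к паре $f\phi_C$, $g\phi_C$ снова применима оснащённая теорема \ref{framed completeness} неподвижно на дуге. Погружение $\phi_C$ проходит из недревесных рёбер лишь по $e_C$ и $e_Z$ (и по рёбрам $T$); поскольку $e_Z$ и остов к этому моменту уже зафиксированы, $f\phi_C$ и $g\phi_C$ совпадают всюду, кроме дуги над $e_C$, и условие $T(f\phi_C,\Xi_f)=T(g\phi_C,\Xi_g)$ даёт регулярную гомотопию с носителем на $e_C$, совмещающую $f$ и $g$ на этом ребре. Тем самым ``неподвижно в точке'' используется ровно один раз (для $Z$), а все прочие совмещения делаются ``неподвижно на дуге'' и потому независимы; в итоге $f$ и $g$ совмещаются на всём $\Gamma$.

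Наконец, несколько рутинных моментов, которые я оставил бы на проверку. Во-первых, фиксация остова: дерево стягиваемо и не несёт циклов, поэтому гомологических препятствий нет, однако на отдельном древесном ребре $f$ и $g$ могут отличаться ``завитком'' (намоткой касательного направления); такие завитки являются кограницами в $H^1(\Gamma;\Z_w)$ с локальной системой $w=f^*w_1(M)$ и стираются протаскиванием вдоль остова, как в доказательстве теоремы \ref{unoriented completeness}. В этих терминах всё доказательство можно резюмировать так: по h-принципу препятствие к $sf\simeq sg$ есть класс в $H^1(\Gamma;\Z_w)$, чьи компоненты на базисе опорных циклов суть $T$-разности на ориентируемых циклах и $s$-разности на неориентируемых, а условия (2)--(3) (вместе с переходом к $\phi_C$) равносильны его обнулению. Во-вторых, согласованность локальных ориентаций и оснащений из условия (1) обеспечивает корректность применения следствия \ref{framed completeness} и совпадение касательных направлений рёбер в вершинах. Основным же препятствием остаётся именно обход ограничения ``неподвижно в точке'' для неориентируемых циклов, преодолеваемый выделением $Z$ и переходом к ориентируемым $\phi_C$.
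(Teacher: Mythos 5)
Ваше доказательство по существу совпадает с авторским: тот же порядок действий --- сначала совмещение выделенного неориентируемого цикла $Z$ по теореме \ref{unoriented completeness} неподвижно в точке с последующим продолжением регулярной гомотопии на весь граф, затем совмещение остова, а затем остальных опорных циклов по следствию \ref{framed completeness} неподвижно вне дуги (для неориентируемых циклов --- через $\phi_C$, представляющее $[C]+[Z]$, с ориентируемым $\nu(f\phi_C)$), причём носители этих гомотопий на недревесных рёбрах не пересекаются. Вводная отсылка к h-принципу для отображений $\Gamma\to SM$ и когомологическая сводка в конце --- лишь переформулировки, не меняющие сути: сама конструкция та же, что в статье.
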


\begin{proof}
Если цикл $Z$ существует, пусть $v$ --- какая-нибудь его вершина.
Используя заданную гомотопию, легко постороить регулярную гомотопию от $f$ к $f'$, совпадающему с $g$ на $v$
и гомотопному $g$, оставляя неподвижной $v$.
Заметим, что локальные ориентации, перенесённые из $f(v)$ в $g(v)=f'(v)$ исходной гомотопией и построенной
регулярной гомотопией совпадают. 
Теперь совместим регулярной гомотопией, неподвижной на $v$, погружения $f'|_Z$ и $g|_Z$, используя предложение
\ref{unoriented completeness}.
Построенная регулярная гомотопия продолжается до регулярной гомотопии всего графа $\Gamma$ между $f'$ и 
погружением $f''$, совпадающим с $g$ на $Z$. 
При этом $f''$ гомотопно $g$, оставляя неподвижной $v$.

Далее совместим регулярной гомотопией ограничения $f''$ и $g$ на малую окрестность $Z$ в $\Gamma$, используя
условие (1), а после этого совместим также и их ограничения на малую окрестность $T$ в $\Gamma$, используя 
условие (1), рассмотрев какое-нибудь симплициальное сдавливание дерева $T$ на его поддерево $T\cap Z$ 
(т.е.\ последовательность графов $\Gamma_1,\dots,\Gamma_n$, в которой $\Gamma_1=T$, $\Gamma_n=T\cap Z$ 
и каждый $\Gamma_{i+1}$ получен из $\Gamma_i$ удалением какой-нибудь вершины степени 1 вместе с содержащим 
её рёбром) и ведя индукцию по этому сдавливанию (т.е.\ по номеру $i$ графа $\Gamma_i$).
(Проблем с совпадением локальных ориентаций в вершинах не возникает, поскольку все они переносятся по дереву
$T$ из вершины $v$.
Каждый шаг индуктивного процесса совмещения основан на том, что два погружения отрезка $[0,1]$, совпадающие на 
$[0,\epsilon]$, регулярно гомотопны, оставляя неподвижным $[0,\epsilon]$.
При этом лишние завитки выталкиваются из отрезка через точку $1$.)  
Это даёт погружение $f'''$, регулярное гомотопное $f$ и совпадающее с $g$ на некоторой окрестности $T\cup Z$.
Если же цикла $Z$ не существует, то аналогично строится $f'''$, регулярное гомотопное $f$ и совпадающее с $g$ на 
некоторой окрестности $T$.
В обоих случаях ясно, что $f'''$ гомотопно $g$, оставляя неподвижной некоторую окрестность $T$.

Рассмотрим теперь какой-нибудь опорный цикл $C$, отличный от $Z$. 
Он задаётся ребром $E$, не входящим в $T$, причём $f'''$ уже совпадает с $g$ на $C\but J$, где $J$ --- некоторая 
замкнутая дуга во внутренности ребра $E$.
Если $\nu(f|_C)$ ориентируемо, по следствию \ref{framed completeness} cуществует регулярная гомотопия между
$f'''|_C$ и $g|_C$, неподвижная вне $J$.
(Проблем с оснащениями не возникает, поскольку они задаются локальными ориентациями в вершинах графа, 
которые уже совпали.)
Если $\nu(f|_C)$ неориентируемо, по тому же следствию \ref{framed completeness} cуществует регулярная гомотопия между 
$f'''\phi_C$ и $g\phi_C$, неподвижная вне $\phi^{-1}(J)$.
Но это то же самое, что и регулярная гомотопия ребра $E$ c носителем в $J$.
Поскольку построенные регулярные гомотопии имеют непересекающиеся замкнутые носители, их одновременное применение
даёт требуемую регулярную гомотопию между $f'''$ и $g$.
\end{proof}

\begin{remark}
Из доказательств предложений \ref{oriented completeness} и \ref{unoriented completeness} и теоремы \ref{unoriented graph} 
ясно, что построенные в них регулярные гомотопии гомотопны (оставляя неподвижными верхнее и нижнее основания цилиндра)
заданной гомотопии между $f$ и $g$.
\end{remark}

\subsection{Классификация} \label{graph classification}

Покажем, как из теоремы \ref{unoriented graph} извлечь полный (но неканоничный) инвариант погружений 
$\Gamma\to M$ (без какой-либо дополнительной структуры) и докажем, что все его значения реализуются.

Легко видеть, что произвольный набор циклических порядков рёбер в вершинах реализуется погружением графа
во вложенный диск $D$ в $M$.
Все дальнейшие модификации будут производиться только на дополнении к некоторой окрестности $T$ в $\Gamma$, 
так что циклические порядки рёбер в вершинах уже не изменятся, если не менять ориентацию диска $D$.

Поскольку $\Gamma$ гомотопически эквивалентен $\Gamma/T$, заданный свободный класс гомотопии $\alpha\in[\Gamma,M]$
задаёт (неоднозначно) пунктированное отображение $\Gamma/T\to M$, где базисная точка $M$ берётся в $D$.
Подправим построенное выше погружение $\Gamma\to D$ на каждом ребре $\Gamma\but T$ так, чтобы реализовать
класс соответствующего лепестка букета $\Gamma/T$ в $\pi_1(M)$.
Полученное в результате погружение $f$ реализует $\alpha$.

Если цикл $Z$ существует, то $s(f|_Z)$ может принимать два значения, и оба они реализуются, т.к.\ к
$f|_Z$ можно добавлять завитки (на ребре $Z$, не входящем в $T$).

Мы можем зафиксировать ориентацию $o$ диска $D$ произвольным образом, что автоматически задаст оснащения $\Xi^o$ 
всех погруженных циклов $f|_C$, для которых $\nu(f|_C)$ ориентируемо, --- и вообще всех композиций $\phi f$, где 
$\phi\:S^1\to\Gamma$ --- погружение, такое что $\nu(\phi f)$ ориентируемо.
Если пара $F:=(f,o)$ не гомотопна паре $(f,-o)$ (что, конечно, является свойством гомотопического класса $\alpha$),
то для всякого погружения $g$, гомотопного $f$, существует единственная ориентация $o_g$ диска $D$, такая что
пара $(f,o)$ гомотопна паре $(g,o_g)$.
В этом случае корректно (хотя и не канонично) определены целочисленные инварианты $T_F(g|_C):=T(g|_C,\Xi^{o_g})$ и 
$T_F(\phi_C g):=T(\phi_C f,\Xi^{o_g})$, где $C$ --- опорный цикл $\Gamma$, такой что $\nu(g|_C)$ ориентируемо,
а $\phi_C\:S^1\to\Gamma$ --- погружение как в пункте (3) формулировки теоремы \ref{unoriented graph}.
Все возможные наборы значений этих инвариантов реализуются, поскольку к погруженным рёбрам $\Gamma\but (T\cup Z)$
можно добавлять завитки (независимо для разных рёбер, число которых равно числу рассматриваемых целочисленных
инвариантов).
По теореме \ref{unoriented graph} комбинация этих целочисленных инвариантов с $s(f|_Z)$ (если цикл $Z$ имеется) 
и циклическими порядками рёбер в вершинах будет в этом случае полным инвариантом регулярной гомотопии погружений
$G\to M$, причём, как показано выше, этот инвариант сюръективен. 

Если же пара $F:=(f,o)$ гомотопна паре $(f,-o)$, то инварианты $T_F(g|_C)$ и $T_F(\phi_C g)$ корректно определены
лишь с точностью до одновременной смены знака, при которой также переворачиваются все циклические порядки рёбер
в вершинах. 
(На $s(f|_Z)$ эта смена знака никак не влияет.)
Таким образом, набор указанных инвариантов, профакторизованный по одновременной смене знака и дополненный классом 
$\bmod2$ вычетов $s(f|_Z)$, является в данном случае полным инвариантом регулярной гомотопии погружений $G\to M$ 
(по теореме \ref{unoriented graph}), причём все его значения реализуются по тем же соображениям, что и в предыдущем случае.

\begin{example} Сравним инварианты нульгомотопных погружений $S^1\vee S^1$ и $S^1\sqcup S^1$ в лист Мёбиуса $M$.
Согласно замечанию \ref{framing remark}(в) и примеру \ref{framing example}(а) нульгомотопные погружения $f\:S^1\to M$ 
классифицируются $T_+(f)$ --- абсолютной величиной числа вращения.
Поэтому и для погружений $S^1\sqcup S^1\to M$ можно говорить о числе завитков на каждой окружности, но не имеет смысла
говорить об их знаках.
Однако для погружений $S^1\vee S^1\to M$ о знаках завитков можно кое-что сказать. 
Хотя они и не определены как глобальные инварианты, можно говорить о том, одинаковые они или разные на двух окружностях
букета (в силу сказанного выше).

Таким образом, некоторые инварианты погруженных графов не сводятся к инвариантам погруженных окружностей --- хотя
с учётом оснащений такая редукция имеет место (согласно теореме \ref{unoriented graph}).
\end{example}

\begin{remark} \label{relations}
Исходной мотивировкой данной работы было упрощение аналога теоремы \ref{unoriented graph}, полученного 
в \cite{P}.
Замечание \ref{framing remark} и \S\ref{graph classification} распространяют это упрощение и на главу 3 
диссертации \cite{P2}, не вошедшую в статью \cite{P}.  
В работе \cite{I} приводится другое доказательство теоремы \ref{unoriented graph} в случае, когда $M=\R P^2$, 
а $\Gamma$ --- букет окружностей.
\end{remark}

\section{Сглаживание погружений}\label{smoothing}

\begin{lemma} \label{Antoine}
Всякое вложение $f\:I\to\R^2$ переводится в гладкое вложение изотопией $\R^2$ с носителем в 
заданной окрестности $f(I)$.
\end{lemma}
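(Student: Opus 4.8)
The plan is to reduce everything to the classical tameness of arcs in a surface and then convert the resulting straightening into an isotopy of small support. Recall that, by a Schoenflies-type theorem (no wild arcs exist in dimension two, in contrast to Antoine's and Fox--Artin's arcs in $\R^3$), every arc in $\R^2$ is tame: given the prescribed neighbourhood of $f(I)$, one can choose inside it a closed round disk $D$ with $f(I)\subset\Int D$, together with a homeomorphism $g\:D\to D$, fixing $\partial D$ pointwise, that carries $f(I)$ onto a smoothly embedded arc $c$. I would take this as the geometric input and spend the rest of the argument arranging the two auxiliary requirements: that the straightened map be smooth as a parametrized map $I\to\R^2$, and that the whole straightening be realized by an ambient isotopy supported in $D$.

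First I would fix the parametrization. The homeomorphism $g$ above makes $g(f(I))=c$ a smooth arc, but $gf\:I\to c$ is merely a homeomorphism of intervals and need not be a smooth map. I would post-compose with a further self-homeomorphism of $D$, again the identity on $\partial D$ and carrying $c$ to itself, chosen so that on $c$ it reparametrizes $gf$ to a fixed smooth embedding; since $c$ lies in $\Int D$, this one-dimensional adjustment can be supported in a thin strip around $c$ disjoint from $\partial D$. After this step $gf\:I\to\R^2$ is a smooth embedding and $g$ is still the identity on $\partial D$.

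Second, and this is where the real work lies, I would produce the isotopy with support in $D$. Since $g$ is the identity on $\partial D$, it extends by the identity to a homeomorphism of $\R^2$ supported in $D$. Alexander's trick (coning: $H_t(x)=tg(x/t)$ for $|x|\le t$ and $H_t(x)=x$ for $|x|\ge t$, in coordinates on $D$ centred so that $\partial D$ is the unit circle) gives an ambient isotopy from $\id$ to $g$ that is the identity on $\partial D$, hence on all of $\R^2\but\Int D$, for every $t$. Its time-one map is $g$, so $H_1f=gf$ is the desired smooth embedding, and the support of $H_t$ lies in $D$, which is contained in the prescribed neighbourhood of $f(I)$.

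The main obstacle is the tameness input itself, namely producing the disk $D$ in which $f(I)$ is straightened by a boundary-fixing homeomorphism; everything after that is a soft application of Alexander's trick. Tameness is special to dimension two and is exactly what fails in $\R^3$ --- the name of the lemma points to Antoine's wild arcs --- so I would not try to avoid invoking the planar Schoenflies theorem. If a self-contained argument were wanted, I would instead subdivide $I$ into subintervals whose images lie in small disks inside the neighbourhood, straighten the pieces one at a time from one endpoint while keeping the already-straightened part fixed, and confine each step to its small disk; the delicate point there is matching the pieces at the subdivision points with nonvanishing one-sided derivatives, which is again where the planar Jordan--Schoenflies theory does the essential smoothing.
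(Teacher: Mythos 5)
Your proposal rests on the same classical input as the paper's proof --- the Schoenflies--Antoine tameness theorem for planar arcs --- but packages it differently. The paper quotes the theorem in its supported-isotopy, PL form (an ambient isotopy of $\R^2$ with support in the given neighbourhood carrying $f(I)$ onto a PL arc, see \cite{Mo}, Theorem 10.8, and \cite{Ke}, Theorem 4.3) and then only has to smooth a PL arc by a further supported isotopy. You instead quote a rel-boundary homeomorphism form and manufacture the isotopy yourself by Alexander's trick. That coning step is correct and is a genuine, if modest, difference: it makes visible that the isotopy assertion of the lemma is soft once one has the homeomorphism-level statement. Your explicit treatment of the parametrization --- arranging that the time-one composition is smooth as a map $I\to\R^2$, not merely that its image is a smooth arc --- is also a point the paper leaves implicit, and it does matter, since the lemma is later applied to produce restrictions of $f$ that are smooth embeddings.

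There are, however, two concrete problems. First, the round disk: a round disk is convex, so any round disk $D$ with $f(I)\subset\Int D$ contains the convex hull of $f(I)$; if $f(I)$ is a long arc bent nearly into a circle and the prescribed neighbourhood is a thin tube around it, no such $D$ lies inside the neighbourhood. You must allow $D$ to be a topological disk, and then the coning formula needs coordinates, i.e.\ a homeomorphism $(D,\partial D)\to(D^2, S^1)$; the Schoenflies theorem supplies one, so the repair costs nothing beyond what you already invoke, but as written the step fails. Second, and more seriously, your ``geometric input'' does not follow from the bare statement that every arc in $\R^2$ is tame (i.e.\ that \emph{some} global homeomorphism of $\R^2$ straightens $f(I)$). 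Given such a global homeomorphism $h$, one can indeed take $D=h^{-1}(R)$ for a thin rectangle $R$, but one cannot simply conjugate a straightening of the segment $h(f(I))$ back by $h^{-1}$: smoothness of the image is destroyed by $h^{-1}$, and cutting $h$ itself off so as to be the identity on $\partial D$ is an isotopy-extension issue, not a formality. What you actually need to cite is the relative/supported version of the theorem --- which is exactly what the references above provide and what the paper cites --- and note also that the classical statements produce a PL arc, not a smooth one, so the small smoothing step of the paper's proof is still owed. Once the input is stated in that form, your argument goes through; but at that point the cited results already hand you an isotopy with support in the given neighbourhood, which is why the paper's proof consists of two sentences.
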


\begin{proof} Если заменить <<гладкое>> на <<кусочно-линейное>>, то это --- известная теорема Шёнфлиса--Антуана
\cite{An}*{p.~21, Assertion 17} 
(см.\ также \cite{Mo}*{Theorem 10.8} и \cite{Ke}*{Theorem 4.3}; наиболее простое доказательство 
получается по аналогии с \cite{Bi}*{proof of Theorem III.6.A}).
Но кусочно-линейную дугу несложно сгладить изотопией $\R^2$ с носителем в заданной окрестности дуги.
\end{proof}

\begin{lemma} \label{Antoine2}
Всякое вложение $f\:[-2,2]\to\R^2$, такое что $f|_{[-2,-1]\cup[1,2]}$ --- гладкое вложение, 
изотопно гладкому вложению 
изотопией с носителем в $[-1,1]$.
\end{lemma}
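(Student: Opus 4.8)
The plan is to proceed in two stages: first make $f$ a smooth straight continuation of the already-smooth ends in small neighborhoods of the junction parameters $t=\pm1$, using only an isotopy supported in $[-1,1]$; and then smooth the remaining wild middle by an ambient isotopy furnished by Lemma \ref{Antoine}, arranged so as to leave the straightened ends untouched. The point of doing the junctions first is that afterwards the arc is a genuine line segment near each junction, which is exactly the local model in which the middle-smoothing can be made to fix the ends.

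For the first stage, since $f|_{[-2,-1]}$ is a smooth embedding, I would pick a smooth chart $\psi_-$ on a neighborhood $U_-$ of $f(-1)$ carrying $f([-2,-1])\cap U_-$ onto a segment of the negative $x$-axis ending at $\psi_-(f(-1))=0$, with incoming velocity along $+x$. In these coordinates the middle germ $B_-:=f([-1,-1+\eps])$ is a topological arc issuing from the origin. Choosing a small disk $D\subset U_-$ whose left radius is the image of the end, and small enough that $f^{-1}(D)$ is a small parameter neighborhood of $-1$, I would first smooth $B_-\cap D$ by Lemma \ref{Antoine} inside $D$, then ambiently isotope the resulting smooth arc, rel the origin and rel $\partial D$, onto the positive $x$-axis, keeping it embedded and disjoint from the left radius and from the rest of $f([-2,2])$. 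Performing the symmetric construction at $t=1$ and keeping the supports in small parameter neighborhoods of $\pm1$ inside $(-1,1)$ yields an isotopy supported in $[-1,1]$ after which $f$ is a smooth embedding on $[-2,-1+\eps]\cup[1-\eps,2]$ and is literally a straight segment on $[-1,-1+\eps]\cup[1-\eps,1]$.

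For the second stage only the open middle $[-1+\eps,1-\eps]$ remains to be smoothed. I would apply Lemma \ref{Antoine} to $f|_{[-1+\eps,1-\eps]}$, obtaining an ambient isotopy $H_t$ of $\R^2$ supported in a thin tubular neighborhood $N$ of $f([-1+\eps,1-\eps])$ with $H_1f$ smooth there. Taking the tube radius smaller than the (positive) distance from $f([-1+\eps,1-\eps])$ to $f([-2,-1]\cup[1,2])$ ensures that $N$ misses the fixed ends entirely, so that $N\cap f([-2,2])=f\big((-1+\eps-\eta,\,1-\eps+\eta)\big)$ for some $\eta<\eps$ and the support lies strictly inside $(-1,1)$; in particular $H_tf=f$ on $[-2,-1]\cup[1,2]$. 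The main obstacle is the overlap of $N$ with the straight caps $f([-1+\eps-\eta,-1+\eps])$ and $f([1-\eps,1-\eps+\eta])$: a priori $H_t$ is only a homeomorphism and could destroy their smoothness. This is exactly where the first stage pays off. Since along the caps the arc is a genuine segment (the local $x$-axis), I would demand that near the caps $H_t$ be a smooth diffeomorphism leaving that axis invariant, so the caps stay straight and the smoothed middle meets them tangentially. Concretely this is the relative form of Lemma \ref{Antoine} --- smoothing an arc rel a neighborhood of a closed subset on which it is already a smooth straight segment --- which follows from the same PL-then-smooth argument of Lemma \ref{Antoine} carried out rel that subset. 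Granting this, $H_1f$ is smooth on all of $[-2,2]$ and agrees with $f$ off $[-1,1]$, so $H_tf$ is the required isotopy; the only delicate point is arranging $C^\infty$ (not merely $C^1$) matching where the smoothed middle joins each cap, which I would secure by forcing the smoothed middle to coincide with the continued segment on a subinterval of each cap.
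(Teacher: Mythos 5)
Your argument has a genuine gap, and it sits exactly at the hard point of the lemma. The difficulty is that the wildness of $f|_{[-1,1]}$ may accumulate at the junction points $f(\pm1)$: the middle arc can fail to be smooth (indeed can be wild) arbitrarily close to $f(\pm1)$. Hence any ambient isotopy that smooths the middle near $f(-1)$ must move points in a whole neighborhood of $f(-1)$, and every such neighborhood contains points of the already-smooth end $f([-2,-1])$. Lemma \ref{Antoine} only provides an isotopy supported in a given neighborhood of the arc being smoothed; it says nothing about keeping a prescribed subarc pointwise fixed. So already your first stage --- applying Lemma \ref{Antoine} to $B_-\cap D$ --- produces an ambient isotopy that in general moves $f([-2,-1])\cap D$, and the induced isotopy of $f$ is therefore \emph{not} supported in $[-1,1]$; the phrase ``keeping the supports in small parameter neighborhoods of $\pm1$ inside $(-1,1)$'' is precisely what is not established. (A smaller instance of the same problem: Lemma \ref{Antoine} does not fix the endpoints of the arc, so your smoothed arc need not issue from the origin, and ``rel the origin'' does not parse.) The second stage has the same problem, and there you do acknowledge it: what you need throughout is a \emph{relative} form of Lemma \ref{Antoine}, taming an arc rel a subarc that is already straight, where the wild part abuts an endpoint of that subarc. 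But that relative statement is essentially an ambient strengthening of Lemma \ref{Antoine2} itself; it is not what Moise, Keldysh or Bing prove, and it does not follow by ``carrying out the same PL-then-smooth argument rel that subset'' --- those proofs are global and do not relativize for free. As written, your proof assumes (a stronger form of) the statement it is trying to prove.

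The paper's proof avoids all of this by never smoothing the wild middle at all, and by not asking for an ambient isotopy --- note the remark after the statement of the lemma: the isotopy produced is a homotopy through embeddings, not covered by an isotopy of $\R^2$, and that is what makes the proof simple. One chooses a diffeomorphism $H$ of $\R^2$ carrying $f|_{[-2,-1]\cup[1,2]}$ to the standard straight inclusion $g|_{[-2,-1]\cup[1,2]}$, with $Hf([-1,1])$ missing the rays $(-\infty,-2]$ and $[2,\infty)$ of the $x$-axis; then the Alexander-type conical isotopy $h_t(x)=(1-t)\,Hf\bigl(\tfrac{x}{1-t}\bigr)$ for $|x|<1-t$, $h_t(x)=g(x)$ otherwise, shrinks the wild middle to a point while replacing it by the straight segment, and $H^{-1}h_t$ is the required isotopy, supported in $[-1,1]$ by construction and ending at the smooth embedding $H^{-1}g$. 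No appeal to Lemma \ref{Antoine}, absolute or relative, is needed. If you insist on your ambient route, you must first prove the relative taming theorem; that is true but substantially harder, and it is exactly what the coning trick lets one bypass.
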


Под {\it изотопией} здесь понимается гомотопия в классе вложений.
Доказательство не даёт изотопии $\R^2$, накрывающей изотопию отрезка, но зато оно просто.

\begin{proof} Пусть $g\:[-2,2]\to\R^2$ --- стандартное включение на отрезок $[-2,2]$ оси абсцисс, и пусть
$H\:\R^2\to\R^2$ --- диффеоморфизм, переводящий $f|_{[-2,-1]\cup[1,2]}$ в $g|_{[-2,-1]\cup [1,2]}$ и
такой, что $Hf([-1,1])$ не пересекает лучи $[-\infty,-2]$ и $[2,\infty]$ оси абсцисс.
Пусть $h_t\:[-2,2]\to\R^2$ --- коническая изотопия между $Hf$ и $g$, определённая по формуле
$h_t(x)=(1-t)Hf(\frac{x}{1-t})$ при $|x|<1-t$, а иначе $h_t(x)=g(x)$.
Тогда $H^{-1}h_t$ --- искомая изотопия.
\end{proof}

\begin{lemma} \label{Schoenflies}
Всякое вложение $f$ из $S^1$ в поверхность $M$ регулярно гомотопно гладкому погружению.

Более того, если $f$ гладко вкладывает некоторую дугу, то регулярную гомотопию можно считать неподвижной на ней.
\end{lemma}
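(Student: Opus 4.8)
The plan is to smooth $f$ one arc at a time, staying within a single smooth chart of $M$ at each stage and invoking the planar smoothing Lemmas~\ref{Antoine} and~\ref{Antoine2}. Since $S^1$ is compact and $f$ is an embedding, $f(S^1)$ is covered by finitely many smooth coordinate charts $U_i\cong\R^2$; choosing a fine enough subdivision I would fix closed arcs $J_1,\dots,J_n$ in cyclic order with $f(J_i)\subset U_i$, consecutive arcs overlapping in a short arc and non-consecutive arcs disjoint. Because $f$ is a homeomorphism onto $f(S^1)$, I may also shrink the chart neighborhoods so that each neighborhood of $f(J_i)$ used below meets $f(S^1)$ only along $f$ of a slightly enlarged arc; this guarantees that an isotopy supported there disturbs no distant part of the circle, even where $f(S^1)$ comes close to itself. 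Writing $A_k=J_1\cup\dots\cup J_k$, the goal of the $k$-th stage is a regular homotopy, fixed on $A_{k-1}$, after which $f$ is a smooth embedding on $A_k$.

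For the base stage I would apply Lemma~\ref{Antoine} in the chart $U_1$ to obtain an ambient isotopy of $\R^2$, supported in the chosen neighborhood of $f(J_1)$, carrying $f|_{J_1}$ to a smooth embedding; transported to $M$ and extended by the identity, this is an ambient isotopy of $M$, hence a regular homotopy (its trace is the composite of the embedding $f\x\id$ of $S^1\x I$ with a homeomorphism of $M\x I$). For the inductive stage, the overlap of $J_{k+1}$ with $A_k$ is already a smooth embedding, so I parametrize $J_{k+1}\cong[-2,2]$ with $[-2,-1]$ equal to that smooth overlap. When $k+1<n$ the right endpoint is free, so I first smooth a short collar $[1,2]$ by Lemma~\ref{Antoine}, supported near the free end and away from the overlap, and then apply Lemma~\ref{Antoine2} to smooth the middle $[-1,1]$ rel its now-smooth ends. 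For the closing arc $J_n$ both ends are smooth overlaps with $A_{n-1}$, so Lemma~\ref{Antoine2} applies directly. In every case the isotopy furnished by Lemma~\ref{Antoine2} is supported in the parameter interval $[-1,1]$ and therefore fixes $f$ on $[-2,-1]\cup[1,2]$, so it leaves $A_k$ untouched and glues to the identity on the rest of $S^1$.

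It remains to see that the Lemma~\ref{Antoine2} stages are regular homotopies, and here I would use only that an immersion is a local embedding. At each time the level map is a topological immersion: it is an embedding on the interior of the moving arc, and near the gluing points and on the fixed part it coincides with the fixed smooth embedding, so it is a local embedding everywhere; note that the moving arc is allowed to cross the rest of $f(S^1)$, which costs nothing since global injectivity is not required. Likewise the trace is, on each piece, a level-preserving continuous injection of a compact space, hence a local embedding, so the whole trace is a topological immersion. Concatenating the finitely many regular homotopies, and observing that a point is never moved once it has been made smooth (so the smoothings agree on overlaps), produces a regular homotopy from $f$ to a globally smooth immersion $f_n$.

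For the final clause, if $f$ already smoothly embeds an arc $J_0$ I would take $A_0:=J_0$ as the initial smooth region and build the cover outward from the two ends of $J_0$, so that every stage is an inductive stage (the unconstrained base stage never occurs) and every isotopy is supported away from $J_0$; the resulting regular homotopy is then fixed on $J_0$. The one point requiring care---and the reason the two planar lemmas are split as they are---is that each arc must be smoothed rel the already-smooth overlap: Lemma~\ref{Antoine} alone could un-smooth that overlap, so the one-sided and two-sided constraints are exactly what Lemmas~\ref{Antoine} and~\ref{Antoine2} supply, and the closing-up at $J_n$, where smooth material must be respected at both ends, is what makes the two-sided Lemma~\ref{Antoine2} indispensable.
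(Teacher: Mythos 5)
Your overall strategy --- chart-by-chart smoothing, with Lemma~\ref{Antoine} handling free ends ambiently and Lemma~\ref{Antoine2} handling the rel-boundary steps, together with the correct observation that the non-ambient stages are still regular homotopies because only local injectivity of levels and traces is required --- is genuinely different from the paper's proof, and several of its ingredients are sound. But there is a concrete gap: neither Lemma~\ref{Antoine} nor Lemma~\ref{Antoine2} gives any control over \emph{where inside the support} points are displaced, while your bookkeeping silently assumes such control. The base-stage isotopy is supported in a neighborhood $N_1$ of the \emph{whole} arc $f(J_1)$; the points of the overlaps $f(J_1\cap J_2)$ and $f(J_1\cap J_n)$ can be carried anywhere inside $N_1$ (an isotopy supported in $N_1$ preserves $N_1$, but nothing keeps these points near their original positions), in particular outside $U_2$, resp.\ $U_n$. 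At the next stage the current map on $J_2$ is then an embedded arc lying only in $N_1\cup U_2$, which need not be contained in the pre-chosen chart $U_2$ --- nor in any open set homeomorphic to $\R^2$ that you can exhibit: producing a plane neighborhood of an arc that is merely \emph{topologically} embedded along part of its length is essentially the tameness statement being proved, and Lemma~\ref{Antoine} supplies it only for arcs already sitting in $\R^2$. The same failure recurs at the closing arc $J_n$. Relatedly, your parenthetical ``a point is never moved once it has been made smooth'' is an assertion, not a consequence of your choices: the later collar isotopies are supported in neighborhoods of the collars' current images, and the already-smoothed material --- whose position after an application of Lemma~\ref{Antoine2} is completely uncontrolled within its chart --- must be shown to avoid those neighborhoods; for that one has to shrink the charts themselves (not merely the supports) so that each $U_i$ meets $f(S^1)$ only in $f$ of the enlarged $J_i$, and then verify the resulting combinatorics.

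These difficulties are repairable (prove the ``moreover'' clause first, choosing the cover and the collar supports \emph{after} the smooth arc is given, with each collar support inside $U_k\cap U_{k+1}$; the first statement then reduces to the second by a single application of Lemma~\ref{Antoine} in one disk, so the troublesome unconstrained base stage never occurs), but the paper bypasses them entirely with one idea absent from your proposal: passing to the universal cover. One takes a single closed arc $J\subset S^1$ whose interior contains the closure of the complement of the given smooth arc; since $f$ is an embedding, the lift of $f|_J$ to the universal cover (the plane, or the sphere punctured at a point missing the image) is automatically an embedding of an arc in $\R^2$, smooth near its ends, so a single application of Lemma~\ref{Antoine2} there, projected back down to $M$, already is the desired regular homotopy --- no charts, no induction, and no interaction between stages to control.
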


\begin{proof} Первое утверждение сводится ко второму, если рассмотреть дисковую окрестность $D$ какой-нибудь
точки $f(S^1)$ в $M$ и достаточно малую дугу $J\subset S^1$, такую что $f(J)\subset D$, и применить к 
ней лемму \ref{Antoine}.

Докажем второе утверждение.
Пусть $J\subset S^1$ --- замкнутая дуга, внутренность которой содержит замыкание дополнения до заданной дуги.
Применим лемму \ref{Antoine2} к поднятию $f|_J$ на универсальное накрытие $M$ (если это плоскость; а если это сфера,
выколем из неё точку, не попадающую в образ $J$).
Полученная изотопия проектируется в искомую регулярную гомотопию в $M$.
\end{proof}

\begin{theorem} \label{smoothing imm}
Всякое погружение $f$ из $S^1$ в поверхность $M$ регулярно гомотопно гладкому погружению.

Причём регулярную гомотопию можно взять неподвижной на заданной точке $S^1$ или, если ограничение $f$ гладко
погружает некоторую дугу, --- то на этой дуге.
\end{theorem}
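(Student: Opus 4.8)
The plan is to upgrade Lemma~\ref{Schoenflies} from embeddings to immersions. The new difficulty is that an immersion $f$ is only a \emph{local} embedding, so one cannot lift $f$ on a long arc to a covering and straighten it there as a single embedded arc, the way it was done for embeddings. Instead I would first use the local-embedding property together with compactness of $S^1$ to choose cut points dividing $S^1$ into arcs $A_k$, each of which (with a small collar) $f$ maps by an embedding into a coordinate disk $D_k\subset M$, identified with $\R^2$. On each such disk the arc $f|_{A_k}$ is literally an embedded arc in $\R^2$, so Lemma~\ref{Antoine2} and a one-sided variant of it become available without passing to any cover.

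The observation that makes the immersion case no harder than the embedded case is that the smoothing moves of Lemma~\ref{Antoine2} are isotopies of the \emph{map} of a single arc, not ambient isotopies. If one deforms $f$ by moving the image of one arc $A_k$ through embeddings of that arc while leaving $f$ static elsewhere, then $f$ remains an immersion at every moment and the already-smoothed part is untouched: being an immersion means being a local embedding, a condition local in the domain $S^1\times I$, so the moving arc may cross the rest of the image of $f$ freely, transversally or not, without harm. This is exactly why I would avoid the ambient version, Lemma~\ref{Antoine}, whose homeomorphisms would drag and un-smooth the other sheets passing through $D_k$. Thus each arc can be straightened in its own disk by a map-isotopy supported in that arc, and these combine into a genuine regular homotopy of $f$.

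With this in hand I would prove the statement relative to a smoothly immersed arc $K$, from which both the unconstrained assertion and the relative-to-a-point assertion follow by first smoothing a small arc (by Lemma~\ref{Antoine} in the unconstrained case, and by a map-isotopy fixing the marked point in the pointed case), exactly as the first part of Lemma~\ref{Schoenflies} was reduced to its relative form. For the relative-to-$K$ statement I would propagate smoothness from one end of $K$ around the circle to the other, advancing a smooth front: at each step I smooth the next little arc by the one-sided form of Lemma~\ref{Antoine2}, anchored on the already-smooth side and fixing the far endpoint as a point.

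The main technical point---and the place I expect the real care to be needed---is that the end result must be a genuinely smooth immersion, i.e.\ $C^1$ (indeed $C^\infty$) with non-vanishing derivative \emph{including at the cut points}, rather than merely piecewise smooth with corners. One-sided straightening creates a corner at the current frontier point; the bookkeeping trick is that the next step smooths an arc containing that frontier point in its interior, absorbing the previous corner while creating only a new one further along. Hence at any moment there is a single corner, at the advancing front, and when the front finally meets the far end of $K$ the closing step uses the two-sided Lemma~\ref{Antoine2} (both ends now smooth), which removes the last corner and creates none. Since $f$ is a local embedding, the two tangent directions approaching any frontier point are never directly opposed, so this rounding always keeps the map a local embedding. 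Concatenating the finitely many map-isotopies, each supported in an arc of $S^1$ and fixing $K$, yields the desired regular homotopy carrying $f$ to a smooth immersion rel $K$.
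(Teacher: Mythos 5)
Your central locality observation is correct and is in fact how the paper itself uses Lemma~\ref{Antoine2}: a homotopy that moves $f$ only on an arc, through maps that stay embeddings of a neighbourhood of that arc, and is stationary elsewhere, is a regular homotopy (this is exactly the mechanism inside Lemma~\ref{Schoenflies}). The genuine gap is in the one-sided propagation step, and it is not a bookkeeping issue --- it is the whole difficulty of the theorem. At the advancing front $b$ your scheme requires an intermediate map which is smooth, with non-vanishing one-sided derivative, on the arc behind $b$, coincides with the \emph{original topological} immersion ahead of $b$, and is still locally injective at $b$. Such a map need not exist. Let $f$, in a chart near $f(b)=0$ and in suitable parameters, equal the spiral $t\mapsto t\,e^{i/t}$ ahead of $b$, with the incoming branch the interleaved spiral $t\mapsto |t|\,e^{i(1/|t|+\pi)}$; this germ is a topological embedding (points of equal radius differ in angle by $\pi$), and it closes up to a topological immersion, even an embedding, of $S^1$, so it satisfies all the hypotheses of your setup. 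Now any arc ending at $0$ whose argument stays bounded --- in particular any $C^1$ arc with non-zero derivative at its endpoint --- must cross the spiral in every neighbourhood of $0$: in the universal cover $(r,\theta)$ of the punctured plane, each complementary region of the lifts of the spiral satisfies $\theta>1/r+\mathrm{const}$, so a connected path with $r\to 0$ and bounded $\theta$ cannot avoid the lifts. Hence after your one-sided smoothing the map is injective on no neighbourhood of $b$; the homotopy you performed was not regular, and the next step, which needs the current map to embed an arc around $b$ into a chart, cannot even begin. Your justification that ``the two tangent directions approaching any frontier point are never directly opposed'' misses the point: ahead of the front the map has no tangent direction at all, and the obstruction is unbounded winding of the continuation, not a corner angle. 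So the one-sided variant of Lemma~\ref{Antoine2} in the form you need --- one whose output can be glued at the far endpoint to the untouched, non-smooth part of $f$ --- is false, and choosing cut points at which $f$ does not spiral would itself require an argument you do not supply.

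This junction problem is precisely what the paper's proof is built to avoid: it never anchors a smoothing at a point where the map is not already smooth. It first applies the \emph{ambient} Lemma~\ref{Antoine} --- which you reject, but which, composed with the whole immersion, is automatically a regular homotopy and creates no junctions --- to make $f$ smooth on small disjoint arcs; it then punctures $M$ at auxiliary points and passes to the cover of the punctured surface corresponding to $\left<[f]\right>$, where the lift of $f$ is shown to be an \emph{embedded} circle, so that Lemma~\ref{Schoenflies} (whose use of the two-sided Lemma~\ref{Antoine2} anchors both ends of every moving arc in the already smooth part) finishes the proof. If you want to keep your chart-by-chart plan, the workable repair is: use Lemma~\ref{Antoine} ambiently to smooth one tiny arc $K_i'$ in each overlap of consecutive chart-arcs (their images can be made disjoint, so the ambient isotopies combine into one), and then apply the two-sided Lemma~\ref{Antoine2} simultaneously, as map-isotopies, to the arcs between consecutive $K_i'$; every junction then lies inside some $K_i'$, where the map is already smooth, and neither a one-sided lemma nor any corner bookkeeping is needed.
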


\begin{proof}
Пусть задано погружение $f\:S^1\to M$.
В силу компактности $S^1$ она покрывается конечным числом открытых дуг $J_1,\dots,J_m$, таких что 
каждое $f|_{J_i}$ --- вложение.
Можно считать, что непусты лишь пересечения вида $K_i:=J_i\cap J_{i+1}$ (сложение по модулю $m$).
Уменьшая при необходимости дуги $K_i$, можно считать, что образы их замыканий не пересекаются, и что
каждая из них содержится во вложенном диске $D_i\subset M$.
Тогда из леммы \ref{Antoine} следует, что существует объемлющая изотопия $H_t$, $H_0=\id_M$, переводящая каждую 
вложенную дугу $f(K_i)$ в гладко вложенную дугу $g(K_i)$, где $g=H_1f$.
Рассмотрим замкнутые гладко вложенные дуги $L_1,\dots,L_m$, не пересекающие друг друга и такие, что каждая $L_i$ 
трансверсально пересекает $g(K_i)$ в одной точке $g(p_i)$ и не имеет других точек пересечения с $g(S^1)$.
Рассмотрим также их дизъюнктные окрестности $U_i$, гомеоморфные открытому диску и такие, что каждая $U_i$
пересекает $g(S^1)$ по дуге, лежащей в $g(K_i)$.
Рассмотрим поверхность $M'=M\but(\partial L_1\cup\dots\cup\partial L_m)$, полученную из $M$ прокалыванием в
$2n$ точках, а также её накрытие $M'_g$, соответствующее подгруппе $\pi_1(M')$, порождённой $[g]$.
Покажем, что поднятие $\bar g$ погружения $g$ в $M'_g$ является вложением.
 
Действительно, пусть $\bar g(x)=\bar g(y)$, где $x\ne y$.
Тогда $x$ и $y$ не лежат одновременно ни в какой дуге $J_i$.
Следовательно, найдутся $i$ и $j$, такие что 0-сфера $\{p_i,p_j\}$ зацеплена с 0-сферой $\{x,y\}$ в $S^1$,
причём $x,y\notin K_i\cup K_j$.
Тогда $z:=g(x)=g(y)$ содержится в $N:=M'\but (U_i\cup U_j)$.
Факторпространство $M'/N$ гомотопически эквивалентно букету двух окружностей, причём композиция $g$ с 
факторотображением $q\:M'\to M'/N$ переводит две дуги $I_1$, $I_2$, на которые $x$ и $y$ разделяют $S^1$ 
(с ориентацией, индуцированной из $S^1$), в петли, представляющие обе образующие группы $\pi_1(M'/N)$.
При этом сама эта композиция $qg$ представляет произведение этих образующих.
Поэтому её класс порождает подгруппу, не содержащую классов петель $qg|_{I_1}$ и $qg|_{I_2}$.
Следовательно, классы петель $g|_{I_1}$ и $g|_{I_2}$ также не лежат в подгруппе $\pi_1\big(M',z)$, 
порождённой классом $g$.
Но тогда они не поднимаются в $M'_g$, что противоречит нашему предположению.

Итак, $\bar g$ является вложением. 
Рассмотрим теперь $M''=M\but H_1^{-1}(\partial L_1\cup\dots\cup\partial L_m)$, а также её накрытие $M''_f$, 
соответствующее подгруппе $\pi_1(M'')$, порождённой $[f]$.
Поднятие $\bar f$ погружения $f$ в $M''_f$ переводится в $\bar g$ гомеоморфизмом $M''_f\to M'_g$ накрывающих
пространств, лежащим над гомеоморфизмом $H_1|_{M''}\:M''\to M'$ баз, и потому тоже является вложением.
Значит, по лемме \ref{Schoenflies} оно соединяется регулярной гомотопией $h_t$ с гладким погружением, причём 
если $f$ гладко вкладывает некоторую дугу, то $h_t$ неподвижна на ней.
Если задана точка $1\in S^1$, её можно сделать неподвижной, заменив $h_t$ на $\phi_t^{-1}h_t$, где $\phi_t$ --- 
изотопия от тождества к какому-нибудь диффеоморфизму $M''_f$, такая что $\phi_t\bar f(1)=h_t(1)$.
Но тогда композиция $h_t$, проекции на $M''$ и включения $M''\to M$ является искомой регулярной гомотопией от $f$ 
к гладкому погружению.
\end{proof}

\begin{theorem} \label{smoothing hom}
Если два гладких погружения $f,g\:S^1\to M$ регулярно гомотопны, то они гладко регулярно гомотопны. 

Более того, гладкая регулярная гомотопия будет неподвижной на заданной точке или дуге, если это выполнено для заданной
регулярной гомотопии. 
\end{theorem}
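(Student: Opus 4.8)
Идея --- свести всё к уже доказанным теоремам полноты \ref{oriented completeness}, \ref{unoriented completeness} и следствию \ref{framed completeness}. Их доказательства, будучи применёнными к паре \emph{гладких} погружений, сводят дело к гладкому случаю и далее ссылаются на h-принцип, а потому доставляют не произвольную, а именно гладкую регулярную гомотопию. Остаётся проверить, что посылки этих утверждений выполнены, когда $f$ и $g$ гладки и соединены произвольной (не обязательно гладкой) регулярной гомотопией.

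Всякая регулярная гомотопия является, в частности, гомотопией, так что $f$ и $g$ гомотопны. Инварианты $T(f)$ в ориентированном случае, а в общем случае $s(f)$ и $T(f,\Xi)$, по своему определению корректны для произвольных непрерывных погружений и инвариантны при произвольной регулярной гомотопии. Поэтому заданная регулярная гомотопия обеспечивает совпадение этих инвариантов у $f$ и $g$ (с оснащениями, согласованными относительно неё, понимаемой как гомотопия). Тем самым выполнены условия следствия \ref{unoriented classification}, и $f$, $g$ регулярно гомотопны; поскольку оба погружения гладки, построенная через h-принцип гомотопия гладкая. Это доказывает первое утверждение.

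Пусть теперь заданная регулярная гомотопия неподвижна на точке $p$ или дуге $J$. Если она неподвижна на точке $p$, то пункт ``более того'' теорем \ref{oriented completeness} и \ref{unoriented completeness} (а в оснащённом случае --- следствия \ref{framed completeness}) позволяет выбрать гладкую регулярную гомотопию неподвижной на $p$, так как соответствующие инварианты снова совпадают. Если же гомотопия неподвижна на дуге $J$, а $\nu(f)$ ориентируемо либо $f$ оснащено, то гладкую регулярную гомотопию, неподвижную на $J$, доставляет следствие \ref{framed completeness} (или теорема \ref{oriented completeness}).

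Основным препятствием является случай неподвижной дуги $J$ при неориентируемом $\nu(f)$: здесь, как показывает пример после теоремы \ref{unoriented completeness}, самопересечённость $s$ перестаёт быть полным инвариантом относительно неподвижности на дуге. Чтобы его обойти, перейду к связной сумме $f\#_p g^*$ вдоль пути $p$, целиком лежащего в $J$ (совпадающие у $f$ и $g$ локальные ориентации на $J$ играют роль локального оснащения). Это погружение нульгомотопно, а значит имеет ориентируемое нормальное расслоение, и для него определено ориентированное число вращения на универсальном накрытии. Заданная топологическая регулярная гомотопия $H_t$, неподвижная на $J$, порождает регулярную гомотопию $f\#_p H_t^*$ от $f\#_p g^*$ к тривиальной связной сумме $f\#_p f^*$ (малой восьмёрке вблизи $J$), так что число вращения $f\#_p g^*$ принимает своё тривиальное значение. Так как это число --- инвариант топологической регулярной гомотопии, более тонкий оснащённый инвариант совпадает у $f$ и $g$; повторяя построение из доказательства теоремы \ref{unoriented completeness}, но отслеживая это число вращения вместо одной лишь самопересечённости, по следствию \ref{framed completeness} (через h-принцип) получаю гладкую регулярную гомотопию от $f$ к $g$, неподвижную на $J$. Именно этот шаг я считаю главной технической трудностью.
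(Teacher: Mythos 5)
There is a genuine gap here: your argument is circular. The crucial step is where you verify the hypotheses of the completeness theorems, asserting that the invariants $T(f)$, $T(f,\Xi)$, $s(f)$ ``по своему определению \dots инвариантны при произвольной регулярной гомотопии''. In this paper that assertion is not free-standing: for a smooth immersion, $T$ is defined via the turning of the tangent vector and is obviously invariant only under \emph{smooth} regular homotopy. Its invariance under an arbitrary topological regular homotopy (and its very definition for topological immersions) is derived in \S 1 ``в силу сказанного выше'', i.e.\ precisely from the two smoothing statements of \S\ref{smoothing} --- Theorem \ref{smoothing imm} and Theorem \ref{smoothing hom}, the theorem you are proving. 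The same applies to the framed invariant and to the turning number of $f\#_p g^*$ in your last paragraph (``так как это число --- инвариант топологической регулярной гомотопии''). Your observation that the proofs of Theorem \ref{oriented completeness}, Corollary \ref{framed completeness} and Theorem \ref{unoriented completeness}, applied to a pair of \emph{smooth} immersions, output smooth regular homotopies via the h-principle is correct; but you cannot check their hypotheses $T(f)=T(g)$, etc., from a merely topological regular homotopy without already knowing the theorem being proved.

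The paper's proof uses no invariants and no classification at all; it manufactures the input for the h-principle directly from the given regular homotopy $h_t$. By compactness of $S^1\times I$ there is $\varepsilon>0$ with $h_t(x)\ne h_t(x+\delta)$ for all $t$ and $0<\delta<\varepsilon$; choosing a complete Riemannian metric with exponential maps $e_y$ injective on balls of a uniform radius over the (compact) image, one forms the nonzero secant vectors $v_t^\delta(x)=e_{h_t(x)}^{-1}\big(h_t(x+\delta)\big)/\delta$ and the normalized fields $\phi_t(x)=v_t^\delta(x)/\|v_t^\delta(x)\|$. For $\delta$ small, uniform convergence of secants to tangents shows $\phi_0$ and $\phi_1$ are linearly homotopic (through nonzero vectors) to $sf$ and $sg$; concatenating gives a homotopy $sf\simeq sg$ in $SM$, and the h-principle yields a smooth regular homotopy, with an auxiliary diffeotopy handling the fixed point or arc. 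Note that this secant construction is exactly a proof of the invariance statement you assumed, so it is not a detail you can outsource but the core of the theorem; and it handles the fixed-arc case with non-orientable $\nu(f)$ uniformly, where your proposed fix (``повторяя построение \dots отслеживая это число вращения'') remains too vague to be checked.
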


\begin{proof}
Пусть задана регулярная гомотопия $h_t\:S^1\to M$ между $f$ и $g$.
В силу компактности цилиндра $S^1\x I$ он покрывается конечным числом открытых множеств вида $J_i\x I_i$, $i=1,\dots,m$, 
таких что для каждого $i$, при любом $t\in I_i$ отображение $h_t|_{J_i}$ --- вложение.
Поэтому существует $\eps>0$, такое что $h_t(x)\ne h_t(x+\delta)$ (сложение в группе $S^1$) при $\delta<\eps$ и любом
$t\in I$.
Пусть $e_y\:T_y M\to M$ --- эспоненциальное отображение, заданное некоторой полной римановой метрикой на $M$. 
Оно инъективно на шаре радиуса $r(y)>0$ c центром в начале координат, где функция $r(y)$ непрерывна, и следовательно
ограничена снизу некоторым $r>0$ для всех $y$ из образа $S^1\x I$ (в силу его компактности).
Поэтому для достаточно малых $\delta$ корректно определён ненулевой касательный вектор 
$v_t^\delta(x):=e_{h_t(x)}^{-1}\big(h_t(x+\delta)\big)/\delta$ в точке $x$.
В силу равномерной непрерывности $df_x(1)\:S^1\to TM$ (т.е.\ ограничения $df\:TS^1\to TM$ на единичные вектора) 
функции $v_0^\delta(x)$ равномерно сходятся к $df_x(1)$ при $\delta\to 0$.
Поэтому при достаточно малых $\delta$ линейная гомотопия между $v_0^\delta(x)$ и $df_x(1)$ не задевает начала
координат.
Зафиксировав такое $\delta$, обозначим $v_t^\delta(x)/||v_t^\delta(x)||$ через $\phi_t(x)$.
Тогда $\phi_t\:S^1\to SM$ вместе с гомотопией между $\phi_0$ и $sf$ и аналогичной гомотопией между $\phi_1$ и $sg$
задают гомотопию между сферизованными дифференциалами $sf$ и $sg$, и требуемое утверждение вытекает из h-принципа 
(см.\ \cite{Ad}, \cite{RS}, \cite{RS2}).
Точнее, в случае с неподвижной дугой полученная гладкая регулярная гомотопия будет неподвижна лишь на некоторой точке 
и на касательном векторе в этой точке, но используя это, несложно сделать её неподвижной и на всей дуге; а в случае
с неподвижной точкой мы можем использовать путь $\phi_t(1)$, чтобы построить диффеотопию $G_t$ многообразия $M$,
неподвижную на $h_0(1)$ и такую, что $sG_t\phi_t(1)=\phi_0(1)$.
\end{proof}

\begin{remark} Обобщение теорем \ref{smoothing imm} и \ref{smoothing hom} на гладкие погружения графов 
(в смысле \cite{Mu}) предоставляется читателю.
\end{remark}

\section{Благодарности}
Автор благодарен П. Ахметьеву, Е. Кудрявцевой, Д. Пермякову и О. Фролкиной за полезные обсуждения.
В частности, идея рассмотреть накрытие $M_f$ исходит от Е. Кудрявцевой, а П. Ахметьев и Д. Пермяков указали ошибки
в ранних версиях заметки.

\end{document}